\newtheorem{theorem}{Theorem}[section]
\newtheorem{proposition}{Proposition}[section]
\newtheorem{lemma}{Lemma}[section]
\newtheorem{definition}{Definition}[section]
\newtheorem{example}{Example}[section]
\newtheorem{corollary}{Corollary}[section]
\newtheorem{remark}{Remark}[section]
\newcommand{\s}{\mathbb{S}}
\newcommand{\R}{\mathbb{R}}
\newcommand{\Ric}{Ric}
\newcommand{\tr}{tr}
\newcommand{\Vol}{Vol}
\newcommand{\di}{div}
\newcommand{\Ind}{Ind}
\newcommand{\dist}{{\rm dist}\,}
\newcommand{\Span}{{\rm Span}\,}
\numberwithin{equation}{section}
\title[$f$-Stability Index of the Constant Weighted Mean Curvature Hypersurfaces]{The $f$-Stability Index of the Constant Weighted Mean Curvature Hypersurfaces in Gradient Ricci Solitons}
\author{Hilário Alencar}
\author{Adina Rocha}
\date{December 20, 2016}
\begin{document}
	
\footnotetext{The first author was partially supported by CNPq of Brazil. The second author was partially supported by CAPES of Brazil.}

\subjclass{58J50; 53C42; 58E30}

\begin{abstract}
In this paper, we prove that a noncompact complete hypersurface with finite weighted volume, weighted mean curvature vector bounded in norm, and isometrically immersed in a complete weighted manifold is proper. In addition, we obtain an estimate for $f$-stability index of a constant weighted mean curvature hypersurface with finite weighted volume and isometrically immersed in a shrinking gradient Ricci soliton that admits at least one parallel field globally defined. For such hypersurface, we still give a necessary condition for equality to be achieved in the estimate obtained.\\
\end{abstract}

\keywords{Proper hypersurface; Weighted volume; Constant weighted mean curvature; Index of f-stability operator; Parallel field.}

\maketitle

\section{Introduction}
\label{intro}

Consider a $(n+1)$-dimensional Riemannian manifold $(\overline M^{n+1},g)$ endowed with a weighted measure of the form $e^{-f}d\mu$, where $f$ is a smooth function on $M$ and $d\mu$ is the volume element induced by the Riemannian metric $g$. A {\it weighted manifold} is a triple $${\overline M}_f^{n+1}=( \overline M^{n+1},g,e^{-f}d\mu).$$
 A natural extension of the Ricci tensor to this new context is the Bakry-\'{E}mery Ricci tensor, see \cite{BakryEmery1985}, given by
 $$\overline{\Ric}_f=\overline{\Ric}+\overline{\nabla}^{2} f,$$
 where \ $\overline{\nabla}^{2} f$ \ is the Hessian of $f$ on $\overline M^{n+1}$. It is known that a complete weighted manifold satisfying $\overline{\Ric}_f\geq\displaystyle{k}g$ for some constant $k>0$ is not necessarily compact. One of the examples is the Gaussian shrinking soliton $\left(\R^{n+1},g_{can},e^{-\frac{|x|^2}{4}}d\mu\right)$ with the canonical metric $g_{can}$ and $\overline{\Ric}_f=\displaystyle{\frac{1}{2}g_{can}}.$ 
 
 The gradient Ricci solitons are natural generalizations of the Einstein metrics and was introduced by Hamilton in \cite{Hamilton1988}. Indeed, a complete Riemannian metric $g$ on a smooth manifold $\overline M^{n+1}$ is a {\it shrinking gradient Ricci soliton} if there exists a potential function $f$, and a real constant $k>0$ such that
 the Ricci tensor $\overline{\Ric}$ of the metric $g$ satisfies the equation
 \begin{equation}
 \overline{\Ric}+\overline{\nabla}^2f=kg,
 \end{equation}
 where $\overline{\nabla}^2f$ denotes the Hessian of $f$. In this context, the gradient Ricci solitons are complete weighted manifolds with $\overline{\Ric}_f=kg$ for some constant real $k$. 
 
 Observe that when the potential function is a constant, the gradient Ricci solitons are simply Einstein metrics. It is still important to mention that gradient Ricci solitons plays an important role in Hamilton's Ricci flow and correspond to self-similar solutions, and often arise as
 Type I singularity models. For more details see \cite{Hamilton1982}.
 
 Let $x:M^n\rightarrow\overline M^{n+1}_f$ be an isometric immersion of a Riemannian orientable manifold $M^n$ into weighted manifold $\overline M^{n+1}_f.$ The function $f:\overline M \rightarrow\R$, restricted to $M$, induces a weighted measure $e^{-f}d\sigma$ on $M$. Thus, we have an induced weighted manifold $M_f^n=(M,\langle\,,\,\rangle,e^{-f}d\sigma)$.
 
The {\it second fundamental form} $A$ of $x$ is defined by
 $$A(X,Y)=(\overline\nabla_XY)^{\perp}, \ \ \ \ \ \ X,Y\in T_pM, \ \ p\in M,$$
 where $\perp$ symbolizes the projection above the normal bundle of $M$. The {\it weighted mean curvature vector} of $M$ is defined by
 $${\bf H}_f={\bf H}+(\overline\nabla f)^{\perp},$$
 and its the {\it weighted mean curvature} $H_f$ is given by $${\bf H}_f=-H_f\eta,$$ 
 where ${\bf H}=\tr A$ and $\eta$ is unit outside normal vector field. The hypersurface $M$ is called {\it $f$-minimal} when its weighted mean curvature vector ${\bf H}_f$ vanishes identically, and when there exists real constant $C$ such that ${\bf H}_f=-C\eta$, we say the hypersurface $M$ has {\it constant weighted mean curvature}. 
 
 The {\it weighted volume} of a measurable set $\Omega\subset M$ is given by $$\Vol_f(\Omega)=\int_{\Omega}e^{-f}d\sigma.$$

Let $B^{\overline M}_r$ be the geodesic ball of $\overline M$ with center in a fixed point $o\in \overline M$  and radius $r>0$. It is said that the weighted volume of $M$ has {\it polynomial growth} if there exists positive numbers $\alpha$ and $C$ such that 
\begin{equation}\label{equation10}
\Vol_f(B^{\overline M}_r\cap M)\leq Cr^{\alpha}
\end{equation} 
for any $r\geq1.$ When $\alpha=n$ in (\ref{equation10}), $M$ is said to have Euclidean volume growth.

We can consider either $f$-minimal or constant weighted mean curvature hypersurfaces in gradient Ricci solitons. In particular, a self-shrinker to the mean curvature flow is a $f$-minimal hypersurface of the shrinking Gaussian soliton. In \cite{ChengZhou2013}, Cheng and Zhou showed that for $f$-minimal hypersurfaces in the shrinking Gaussian soliton $\left(\R^{n+1},g_{can},e^{-\frac{|x|^2}{4}}dx\right)$, the properness of its immersion, its polynomial volume growth, and its finite weighted volume are equivalent to each other. Those equivalences are still being valid for $f$-minimal hypersurfaces immersed in a complete shrinking gradient Ricci soliton $\overline M_f$ satisfying $\overline{\Ric}_f=\dfrac{1}{2}g$, where $g$ is Riemannian metric and $f$ is a convex function (see  \cite{ChengMejiaZhou2015}). In this direction, the following result was obtained: 

\begin{proposition}\label{ip4}
	Let $M^n$ be a noncompact complete hypersurface isometrically immersed in a complete weighted manifold $\overline M^m_f$, with weighted mean curvature vector bounded in norm. If $M^n$ has finite weighted volume, then $M^n$ is proper.
\end{proposition}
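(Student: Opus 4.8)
The plan is to argue by contradiction, reducing properness to a local weighted-volume lower bound supplied by a monotonicity formula. Suppose $M$ is not proper. Then there is $R>0$ for which $x^{-1}(\overline{B^{\overline M}_R})$ fails to be compact; since $M$ is complete, this set must be unbounded in the intrinsic distance, so I can fix a sequence $\{p_k\}\subset M$ with $\dist_M(p_1,p_k)\to\infty$ while $x(p_k)\in\overline{B^{\overline M}_R}$ for every $k$. The first observation is that on the compact region $\overline{B^{\overline M}_{2R}}$ the hypothesis on ${\bf H}_f$ transfers to the ordinary mean curvature: writing ${\bf H}={\bf H}_f-(\overline\nabla f)^{\perp}$ and using that $f$ is smooth, hence $|\overline\nabla f|\le L$ on the compact set $\overline{B^{\overline M}_{2R}}$, I get $|{\bf H}|\le C_0+L=:\Lambda$ on $x^{-1}(\overline{B^{\overline M}_{2R}})$, where $C_0$ bounds $|{\bf H}_f|$.

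Next I would produce a uniform lower bound for the volume carried by $M$ inside small ambient balls centered at its own points. Since $\overline{B^{\overline M}_{2R}}$ is compact, the ambient sectional curvature and the injectivity radius are controlled there, so the monotonicity formula for immersed submanifolds with bounded mean curvature applies: there exist $\rho_0\in(0,R)$ and $c_0>0$, depending only on $\Lambda$ and the geometry of $\overline M$ on $\overline{B^{\overline M}_{2R}}$, such that for each $p$ with $x(p)\in\overline{B^{\overline M}_R}$ the connected sheet $S_p$ of $x^{-1}(B^{\overline M}_{\rho_0}(x(p)))$ through $p$ satisfies $\Vol(S_p)\ge c_0\rho_0^{\,n}$. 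Because $f$ is continuous, $e^{-f}\ge\delta:=e^{-\max_{\overline{B^{\overline M}_{2R}}}f}>0$ there, and hence $\Vol_f(S_p)\ge\delta c_0\rho_0^{\,n}=:c_1>0$ for every such $p$.

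The final step is to convert the divergence of $\{p_k\}$ into infinitely many disjoint pieces of weighted volume at least $c_1$ lying in the fixed bounded region $x^{-1}(\overline{B^{\overline M}_{2R}})$. Passing to a subsequence with $\dist_M(p_i,p_j)>2\rho_0$ for $i\ne j$, I would argue that the associated sheets $S_{p_k}$ can be chosen pairwise disjoint — equivalently, that infinitely many of the $p_k$ lie in distinct connected components of $x^{-1}(B^{\overline M}_{2\rho_0}(q))$ for a suitable accumulation point $q$ — so that $\Vol_f(M)\ge\sum_k\Vol_f(S_{p_k})=\infty$, contradicting the finiteness of the weighted volume and forcing $M$ to be proper.

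I expect the genuine obstacle to be exactly this last disjointness and summation step, together with the precise form of the monotonicity inequality. Controlling the ordinary mean curvature and passing to weighted volume are routine once one works in a fixed compact region, and the monotonicity estimate under a mere mean-curvature bound is standard; but because the ambient balls $B^{\overline M}_{\rho_0}(x(p_k))$ may overlap heavily when the points $x(p_k)$ accumulate, care is needed to extract genuinely disjoint volume contributions. The heart of the matter is that intrinsic divergence of $\{p_k\}$ cannot coexist with finite weighted volume once each local sheet is known to carry a definite amount of weighted volume.
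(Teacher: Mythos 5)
Your overall skeleton coincides with the paper's: argue by contradiction, extract an intrinsically separated sequence $\{p_k\}$ inside a fixed ambient ball, convert the bound on $|{\bf H}_f|$ into a bound on $|{\bf H}|$ using $\sup_{\overline B_{2R}}|\overline\nabla f|<\infty$, establish a uniform positive lower bound for the weighted volume near each $p_k$, and contradict $\Vol_f(M)<\infty$ by summing over disjoint pieces. But the step you yourself flag as the obstacle is a genuine gap, and in the form you propose it cannot be repaired: you take as your local pieces the connected components (``sheets'') $S_{p_k}$ of $x^{-1}(B^{\overline M}_{\rho_0}(x(p_k)))$, and intrinsic separation of the $p_k$ gives no control whatsoever on these sheets. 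Precisely in the non-proper situation you are trying to rule out, a single connected component of the preimage of a small ambient ball can wander inside that ball and contain points at arbitrarily large intrinsic distance from each other; so after passing to a subsequence with $\dist_M(p_i,p_j)>2\rho_0$ the sheets $S_{p_k}$ may still overlap heavily, or even essentially coincide, and the inequality $\Vol_f(M)\geq\sum_k\Vol_f(S_{p_k})$ counts the same region infinitely often. There is no subsequence trick that places infinitely many $p_k$ in distinct components, and you offer no argument for one. Moreover, the extrinsic monotonicity formula you invoke cannot simply be localized to intrinsic balls, because its proof integrates the divergence of a radial field over the full preimage $x^{-1}(B_\sigma)$, whose boundary lies on the ambient sphere; an intrinsic ball has additional interior boundary that destroys that step.

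The paper closes exactly this gap by working with intrinsic balls from the start: since $\dist_M(p_k,p_j)\geq a$, the balls $B^M_{a/2}(p_k)$ are pairwise disjoint automatically, and they lie in $x^{-1}(B^{\overline M}_{2R}(o))$ because an isometric immersion does not increase distances. The needed lower bound $\Vol\bigl(B^M_{\zeta}(p_k)\bigr)\geq\omega_n\zeta^n e^{-C\zeta}$ is then proved directly, not quoted: choosing $2R<\min\{i_0,1/\sqrt{k}\}$ so the Hessian comparison theorem applies to the extrinsic distance $r_k(x)=\dist_{\overline M}(x,p_k)$, one gets $\Delta r_k^2\geq 2n-2Cr_k$ on $M$, where $C$ absorbs $n\sqrt{k}$, $\sup|{\bf H}_f|$ and $\sup_{\overline B_{2R}}|\overline\nabla f|$; integrating this over $B^M_{\zeta}(p_k)$, applying the divergence theorem and the co-area formula yields $(n-C\zeta)V_k(\zeta)\leq\zeta V_k'(\zeta)$, and integrating $\frac{d}{d\zeta}\log V_k(\zeta)\geq \frac{n}{\zeta}-C$ together with $\lim_{\varepsilon\to 0^+}V_k(\varepsilon)/\varepsilon^n=\omega_n$ gives the stated bound for $0<\zeta\leq a/2$. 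This is the intrinsic-ball substitute for your monotonicity formula, and it is the missing ingredient in your argument; once you have it, your own final summation goes through verbatim with $S_{p_k}$ replaced by $B^M_{a/2}(p_k)$, since $e^{-f}$ is bounded below on the compact set $\overline B^{\overline M}_{2R}(o)$.
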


\begin{proposition} \label{ip1} 
	Let $f\in C^{\infty}(\overline M)$ be a convex function, $\overline M^m_f$ a gradient Ricci soliton with $\overline{\Ric}_f=\dfrac{1}{2}g$, and  $x:M^n\rightarrow\overline M^m_f$ a noncompact complete immersion with weighted mean curvature vector satisfying
	$$\sup_{x\in M}\langle {\bf H}_f,\overline{\nabla}f\rangle<\infty.$$
	If $M^n$ is proper, then it has finite weighted volume and Euclidean volume growth.
\end{proposition}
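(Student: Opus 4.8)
The plan is to distill the whole statement into a single pointwise differential inequality for the restriction $u:=f|_M$ of the potential, and then integrate it against the weighted measure over the sublevel sets of $u$. First I would compute the drift (Bakry--\'Emery) Laplacian $\Delta_f v=\Delta v-\langle\nabla f,\nabla v\rangle$ of $u$, where $\nabla,\Delta$ are the intrinsic operators of $M$; since $\nabla u=(\overline{\nabla}f)^{\top}$, this specializes to $\Delta_f u=\Delta u-|\nabla u|^2$. Writing $\eta$ for the unit normal, the standard immersion formula gives $\Delta u=\tr_M\overline{\nabla}^2 f+\langle\overline{\nabla}f,{\bf H}\rangle$ with $\tr_M\overline{\nabla}^2 f=\overline{\Delta}f-\overline{\nabla}^2 f(\eta,\eta)$. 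I would substitute the soliton equation $\overline{\nabla}^2 f=\tfrac12 g-\overline{\Ric}$ (so $\overline{\Delta}f=\tfrac{m}{2}-\overline{R}$ and $\overline{\nabla}^2 f(\eta,\eta)=\tfrac12-\overline{\Ric}(\eta,\eta)$), use ${\bf H}={\bf H}_f-(\overline{\nabla}f)^{\perp}$ to write $\langle\overline{\nabla}f,{\bf H}\rangle=\langle\overline{\nabla}f,{\bf H}_f\rangle-|(\overline{\nabla}f)^{\perp}|^2$, and invoke Hamilton's normalized identity $\overline{R}+|\overline{\nabla}f|^2=f$ to collapse the scalar-curvature and gradient terms into $-f$. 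After cancellation this yields
\begin{equation}
\Delta_f u=\frac{m-1}{2}+\overline{\Ric}(\eta,\eta)+\langle {\bf H}_f,\overline{\nabla}f\rangle-u.
\end{equation}

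Next I would bring in the two hypotheses. Convexity of $f$ means $\overline{\nabla}^2 f\ge 0$, whence $\overline{\Ric}=\tfrac12 g-\overline{\nabla}^2 f\le\tfrac12 g$ and in particular $\overline{\Ric}(\eta,\eta)\le\tfrac12$; this is exactly where convexity is used. Combined with $\langle {\bf H}_f,\overline{\nabla}f\rangle\le C_0:=\sup_{M}\langle {\bf H}_f,\overline{\nabla}f\rangle<\infty$, the identity above gives the clean inequality $\Delta_f u\le a-u$ with $a:=\tfrac{m}{2}+C_0$.

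The finiteness of the weighted volume would then follow by integrating over the sublevel sets $\Omega_t:=\{x\in M:u(x)<t\}$. Because a complete shrinking soliton has a proper potential $f$ of quadratic growth (Cao--Zhou), each $\{f<t\}$ is precompact in $\overline M$, so properness of the immersion makes $\Omega_t$ precompact in $M$, with finite weighted volume $V(t):=\Vol_f(\Omega_t)$ and, for a.e.\ $t$, smooth boundary $\{u=t\}$ on which the outward conormal is $\nabla u/|\nabla u|$. The weighted divergence theorem gives $\int_{\Omega_t}\Delta_f u\,e^{-f}d\sigma=\int_{\{u=t\}}|\nabla u|\,e^{-t}\,ds\ge 0$, so $\Delta_f u\le a-u$ forces $\int_{\Omega_t}u\,e^{-f}d\sigma\le a\,V(t)$. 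Splitting the left-hand integral at a fixed level $t_0>a$ and using $u\ge t_0$ on $\Omega_t\setminus\Omega_{t_0}$ gives $(t_0-a)V(t)\le t_0V(t_0)$, that is $V(t)\le\frac{t_0}{t_0-a}V(t_0)$ for all $t>t_0$. Letting $t\to\infty$ bounds $\Vol_f(M)$, which proves finite weighted volume; Euclidean volume growth is then immediate from the definition, since $\Vol_f(B^{\overline M}_r\cap M)\le\Vol_f(M)\le\Vol_f(M)\,r^{n}$ for $r\ge 1$.

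The only genuinely delicate point is the curvature term $\overline{\Ric}(\eta,\eta)$ in the Laplacian identity, which carries no a priori bound on a general soliton, and I expect controlling it to be the crux; the convexity of $f$ is precisely what tames it, turning the exact identity into the one-sided inequality that drives the whole estimate. The remaining ingredients---the immersion formula, the harmless constant normalization of Hamilton's identity (which only rescales $e^{-f}d\sigma$ by a factor $e^{-c}$), and the precompactness of the $\Omega_t$---are routine consequences of the Cao--Zhou structure theory for shrinking gradient Ricci solitons.
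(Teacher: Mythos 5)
Your derivation of the pointwise inequality is, modulo bookkeeping, the same as the paper's: substituting the soliton equation and the normalized Cao--Zhou identity $\overline R+|\overline\nabla f|^2=f$ into the immersion formula for $\Delta(f|_M)$, and using convexity exactly once (for you, $\overline{\Ric}(\eta,\eta)\le\tfrac{1}{2}$; in the paper, $\overline{\nabla}^2f(\eta,\eta)\ge0$ --- the same inequality), to reach $\Delta_f(f|_M)\le\tfrac{m}{2}+C-f|_M$. The genuine difference is what happens next. The paper observes that $f|_M$ is proper (quadratic growth of $f$, plus properness of the immersion) and then quotes Theorem 1.1 of \cite{ChengZhou2013} as a black box for both conclusions. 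You instead integrate the inequality over the sublevel sets $\Omega_t$ and run the elementary $(t_0-a)V(t)\le t_0V(t_0)$ argument; this is correct (precompactness of $\Omega_t$ from properness, Sard's theorem for smoothness of a.e.\ level set, and $f\ge0$ from the Cao--Zhou normalization, which you use tacitly when discarding $\int_{\Omega_{t_0}}u\,e^{-f}d\sigma$), and it buys a self-contained proof: in effect you reprove the portion of Cheng--Zhou's theorem that is needed here. A cosmetic point: you wrote everything in codimension one, while the proposition allows $x:M^n\rightarrow\overline M^m$; replace $\overline{\nabla}^2f(\eta,\eta)$ and $\overline{\Ric}(\eta,\eta)$ by sums over an orthonormal normal frame $\{\eta_i\}_{i=n+1}^{m}$ and nothing changes.

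The one point where you fall short of the paper is the Euclidean volume growth clause. Your one-line dispatch $\Vol_f(B_r^{\overline M}\cap M)\le\Vol_f(M)\le\Vol_f(M)\,r^n$ is legitimate only because the paper's definition \eqref{equation10} phrases volume growth in terms of the \emph{weighted} volume $\Vol_f$; under that literal reading the clause is a vacuous consequence of finiteness. What Cheng--Zhou's Theorem 1.1 actually provides --- and what the paper's proof invokes (``Euclidean volume growth of the sub-level set of the potential function'') --- is the unweighted estimate $\Vol(\{f|_M\le t\})\le Ct^{n/2}$, equivalently $\Vol(B_r^{\overline M}\cap M)\le Cr^n$. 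That statement does not follow from $\Vol_f(M)<\infty$: the weight $e^{-f}\sim e^{-r^2/4}$ is so small at infinity that finite weighted volume is compatible with super-polynomial unweighted growth. To recover it you would have to run a coarea argument on the unweighted inequality $\Delta(f|_M)\le\tfrac{m}{2}+C$ (which your computation also yields, since $\overline R\ge0$), which is precisely what Cheng--Zhou do. So your proposal proves the proposition as literally stated, but not the stronger conclusion that the paper's citation actually delivers.
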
 

Now it is known that the {\it weighted Laplacian operator} $\Delta_f$, defined by
$$\Delta_fu:=\Delta u-\langle\nabla f,\nabla u\rangle,$$
is associated to $e^{-f}d\sigma$ as well as $\Delta$ is associated to $d\sigma.$ Moreover, $\Delta_f$ is a self-adjoint operator on the  $L^{2}_f$ space of square integrable functions on $M$ with respect to the measure $e^{-f}d\sigma,$ and therefore, the $L^{2}_f$ spectrum of $\Delta_f$ on $M$, denoted by $\sigma(-\Delta_f)$, is a subset of $[0,+\infty).$
 
Next, let $F:(-\varepsilon,\varepsilon)\times M\rightarrow \overline M_f$, $F_f(p)=F(t,p)$ for all $t\in(-\varepsilon,\varepsilon)$ and $p\in M$ be a variation of the immersion $x$ associated with the normal vector field $u\eta$, where $u\in C^{\infty}_c(M).$  The corresponding variation of the {\it functional weighted area} $\mathcal A_f(t)=\Vol_f(F_t(M))$ satisfies 
\begin{equation} \label{ie1}
 \mathcal A_f'(0)=\int_M H_f ue^{-f}d\sigma,	
  \end{equation}
 where $H_f$ is such that ${\bf H}_f=-H_f\eta$. The expression (\ref{ie1}) is known as {\it first variation formula}. 
  
 The $f$-minimal hypersurfaces are critical points of the functional weighted area. Yet, the hypersurfaces with constant weighted mean curvature can be viewed as critical points of the functional weighted area restricted to variations which preserve the {\it enclose weighted volume,} i.e., for functions $u\in C^{\infty}_c(M)$ which satisfy the additional condition $$\int_Mue^{-f}d\sigma=0.$$ For such critical points, the {\it second variation} of the functional weighted area is given by
 $$\mathcal A_f''(0)=-\int_M\left(u\Delta_fu+\left(|A|^2+\overline{\Ric_f}(\eta,\eta)\right)u^2\right)d\sigma,$$
 where $\overline{\Ric}_f$ is the Bakry-\' Emery Ricci curvature and $A$ is the second fundamental form. For more details, see \cite{Rocha2016}.
 
\begin{remark}
When $f$ is a constant function, the first and second variation formula were given by Barbosa and do Carmo \cite{BarbosadoCarmo1984} and Barbosa, do Carmo and Eschenburg \cite{BarbosadoCarmoEschenburg1988}.    
\end{remark}
 
The operator 
 $$L_f=\Delta_f+|A|^2+\overline{\Ric_f}(\eta,\eta)$$
 is called the {\it $f$-stability operator} of the immersion $x$. In the $f$-minimal case, the $f$-stability operator is viewed as acting on $\mathcal F=C_c^{\infty}(M)$; in the case of the hypersurfaces with constant weighted mean curvature, the $f$-stability operator is viewed as acting on $$\mathcal F=C_c^{\infty}(M)\cap \left\{u\in C^{\infty}_c(M); \ \int_Mue^{-f}d\sigma=0\right\}.$$
 Associated with $L_f$ is the quadratic form
 $$I_f(u,u)=-\int_MuL_fue^{-f}d\sigma.$$
 For each compact domain $\Omega\subset M$, define the index, $\Ind_f\Omega$, of $L_f$ in $\Omega$ as the maximal dimension of a subspace of $\mathcal F$ where $I_f$ is a negative definite. The {\it index}, $\Ind_fM$, of $L_f$ in $M$ (or simply, the index of $M$) is then defined by
 $$\Ind_fM=\sup_{\Omega\subset M}\Ind_f\Omega,$$
 where the supreme is taken over all compact domains $\Omega\subset M.$  For more details, see \cite{Fischer-Colbrie1985} and \cite{ChengZhou2015}.
 
 Let $M\subset\R^{n+1}$ be a proper, non-planar, two-sided hypersurface satisfying $\Vol_f(M)<\infty$, $H=\dfrac{1}{2}\langle x,\eta\rangle+C$ and $\Ind_f(M)\leq n,$ where $H$ is the mean curvature, $x$ is the position vector of $\R^{n+1}$, $\eta$ is the unit normal field of the hypersurface, $\Ind_f(M)$ is the $f$-stability index and $C$ is a real constant. McGonagle and Ross (\cite{McGR2015}, Theorem 5.6) showed that exists a natural number $i$ such that  $n+1-\Ind_f(M)\leq i\leq n$ e $\Sigma=\Sigma_0\times\R^i.$
In addition, they obtained $\Ind_f(M)\geq2$.

It is important to mention that the properness hypothesis can be removed of the Theorem 5.6 of \cite{McGR2015}. In fact, by Proposition \ref{ip4}, the finite weighted volume implies in the properness of its immersion.

Next, we obtain an estimate for $f$-stability index of a constant weighted mean curvature hypersurface with finite weighted volume and isometrically immersed in a gradient Ricci soliton that admit at least one parallel field globally defined. If fact,

\begin{theorem}\label{it2}
	Let $\overline M_f$ be a shrinking gradient Ricci soliton with $\overline{\Ric}_f=kg$. Let $M^n$ be a constant weighted mean curvature hypersurface with finite weighted volume and isometrically immersed in $\overline M_f$. Denote by $\mathcal P_{\overline M_f}$ the set of parallel fields globally defined on $\overline M_f^{n+1}$ and $\eta$ the unit normal field to $M$.  
	\begin{enumerate}
		\item[(i)] If the unit function $1\not\in \{\langle X,\eta\rangle: \ {X\in\mathcal P_{\overline M_f}}\}$, 
		\begin{equation}\label{iequation8}
		\Ind_f(M)\geq\dim \mathcal P_{\overline M_f}-\dim\{X\in P_{\overline M_f}: \ \langle X,\eta\rangle\equiv0\}. 
		\end{equation} 
		\item[(ii)] If the unit function $1\in \{\langle X,\eta\rangle: \ {X\in\mathcal P_{\overline M_f}}\}$, $M$ is totally geodesic.
	\end{enumerate}
\end{theorem}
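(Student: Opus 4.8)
The plan is to use the globally defined parallel fields as test functions for the $f$-stability operator, the whole argument resting on the fact that each such field produces an eigenfunction of $L_f$ with eigenvalue $k>0$.

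First I would prove the pointwise identity $L_f u_X = k\,u_X$ for every $X\in\mathcal P_{\overline M_f}$, where $u_X:=\langle X,\eta\rangle$. Since $X$ is parallel one has $\overline R(\,\cdot\,,\,\cdot\,)X\equiv 0$, and the symmetries of the curvature tensor give $\overline{\Ric}(X,\,\cdot\,)\equiv 0$; inserting this into the soliton equation $\overline{\Ric}+\overline\nabla^2 f=kg$ yields $\overline\nabla^2 f(X,Y)=k\langle X,Y\rangle$ for all $Y$, and in particular $\overline{\Ric}_f(\eta,\eta)=k$. Next, writing the shape operator as $\overline\nabla_{e_i}\eta=-S e_i$ (so $|A|^2=|S|^2$ and $\nabla u_X=-SX^{\top}$) and using the Codazzi equation to compute the divergence of $S$, I would obtain
$$\Delta u_X = -\langle\nabla H, X^{\top}\rangle + \overline{\Ric}(X,\eta) - \left(|A|^2+\overline{\Ric}(\eta,\eta)\right)u_X,$$
where $X^{\top}$ is the tangential part of $X$ and $H$ is the scalar mean curvature. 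The constant weighted mean curvature condition, read as $H+\langle\overline\nabla f,\eta\rangle\equiv -H_f$ constant, lets me substitute $\nabla H=-\nabla\langle\overline\nabla f,\eta\rangle$; then passing to $\Delta_f u_X=\Delta u_X-\langle\nabla f,\nabla u_X\rangle$ the two first–order terms cancel identically, leaving $\Delta_f u_X=\overline\nabla^2 f(X^{\top},\eta)-(|A|^2+\overline{\Ric}(\eta,\eta))u_X$. Substituting $\overline\nabla^2 f(X,\eta)=k u_X$ and $\overline{\Ric}(\eta,\eta)=k-\overline\nabla^2 f(\eta,\eta)$ collapses this to $\Delta_f u_X=-|A|^2 u_X$, whence $L_f u_X=k u_X$. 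I expect the exact cancellation of the first–order terms to be the most delicate bookkeeping in this step.

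With the eigenvalue identity available, for any $u=\sum_i c_i u_{X_i}$ in the image $W:=\{\langle X,\eta\rangle:\ X\in\mathcal P_{\overline M_f}\}$ one has $L_f u=k u$, hence $I_f(u,u)=-k\int_M u^2 e^{-f}d\sigma<0$ for $u\not\equiv 0$; thus $I_f$ is negative definite on $W$. Because $X\mapsto\langle X,\eta\rangle$ is linear with kernel $\{X:\langle X,\eta\rangle\equiv 0\}$, the dimension of $W$ equals the right–hand side of (\ref{iequation8}). To convert negative–definiteness on $W$ into a lower bound for $\Ind_f(M)$ I must produce admissible variations: as $X$ is parallel, $|u_X|\le|X|$ is bounded, so $u_X\in L^2_f(M)$ because $\Vol_f(M)<\infty$, and by Proposition~\ref{ip4} the immersion is proper. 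I would then fix a compact exhaustion of $M$, multiply the $u_X$ by cut-off functions, and use that the $I_f$–forms and the $L^2_f$–norms converge, thereby reducing the global estimate to negative–definiteness of $I_f$ on finite–dimensional spaces of compactly supported functions.

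It remains to enforce the constraint $\int_M u\,e^{-f}d\sigma=0$ defining $\mathcal F$ in the constant weighted mean curvature case, and this is exactly where the dichotomy on the constant function $1$ enters. If $1\in W$, there is $X_0\in\mathcal P_{\overline M_f}$ with $\langle X_0,\eta\rangle\equiv 1$, and the eigenvalue identity gives $k=L_f 1=|A|^2+\overline{\Ric}_f(\eta,\eta)=|A|^2+k$, forcing $|A|^2\equiv 0$; hence $M$ is totally geodesic, which is assertion (ii). If $1\notin W$, I would enlarge $W$ to $\widehat W=W\oplus\R\cdot 1$, of dimension $\dim W+1$, and intersect it with the constraint hyperplane; since $1\notin W$ the constraint functional does not vanish on $\widehat W$, so the intersection $N$ has dimension exactly $\dim W$. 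On $N$ the cross terms collapse through $I_f(w,1)=-k\int_M w\,e^{-f}d\sigma$ (again from $L_f w=k w$), and after substituting the constraint relation the form restricted to $N$ reduces to $-k\int_M w^2 e^{-f}d\sigma$ up to a term carrying the sign of $\int_M|A|^2 e^{-f}d\sigma-k\Vol_f(M)$, giving $\Ind_f(M)\ge\dim W$ and hence (i). The main obstacle I anticipate is precisely this last point: verifying that imposing the mean–zero constraint on the enlarged space preserves negative–definiteness on $N$, i.e. controlling that residual sign, while simultaneously making the cut-off and limit procedure fully rigorous under only the hypothesis of finite weighted volume.
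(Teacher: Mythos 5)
Your strategy coincides with the paper's: the eigenfunction identity $L_f\langle X,\eta\rangle=k\langle X,\eta\rangle$ (Proposition \ref{proposition1}), enlarging $W=\{\langle X,\eta\rangle:X\in\mathcal P_{\overline M_f}\}$ by the constant function, intersecting with the mean-zero constraint, rank--nullity for $X\mapsto\langle X,\eta\rangle$, and the argument for (ii) are all the same. However, there is a genuine gap at precisely the point you flag as ``the main obstacle,'' and it is not a removable bookkeeping issue: the cross term between the constant $c_0$ and $w=\langle X,\eta\rangle$. Since $L_f1=|A|^2+k$, the quadratic form on $\widehat W=W\oplus\R\cdot 1$ contains the term $-2c_0\int_M|A|^2 w\,e^{-f}d\sigma$, whose sign you do not control; your reduction ``up to a term carrying the sign of $\int_M|A|^2e^{-f}d\sigma-k\Vol_f(M)$'' is an admission that negative definiteness on $N$ has not been proved. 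Note also that your identity $I_f(w,1)=-k\int_M w\,e^{-f}d\sigma$ is an integration by parts: computing the same pairing as $-\int_M w\,L_f1\,e^{-f}d\sigma$ produces exactly the uncontrolled term, and the equality of the two expressions is \emph{equivalent} to the missing identity $\int_M|A|^2w\,e^{-f}d\sigma=0$. Worse, in the noncompact case (the main case here) $w$ and $1$ are not compactly supported, so neither pairing is even defined without cutoffs, and since Theorem \ref{it2} does not assume $\int_M|A|^2e^{-f}d\sigma<\infty$, the integral $\int_M|A|^2w\,e^{-f}d\sigma$ need not converge at all.

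The paper closes this gap in two different ways. For compact $M$ (Lemma \ref{lema2}): integrating $\Delta_f w=-|A|^2w$ over $M$ gives $\int_M|A|^2w\,e^{-f}d\sigma=0$ outright, after which
\begin{equation*}
I_f(c_0+w,c_0+w)=-c_0^2\int_M|A|^2e^{-f}d\sigma-k\int_M(c_0+w)^2e^{-f}d\sigma<0,
\end{equation*}
i.e.\ $I_f$ is negative definite on all of $\widehat W$, with no residual sign whatsoever. For noncompact $M$ with finite weighted volume (Proposition \ref{proposition3} and Lemma \ref{lema5}), the paper proves the cutoff identity $\int_M\phi^2|A|^2\langle X,\eta\rangle e^{-f}d\sigma=-2\int_M\phi\langle\nabla\phi,AX^{\top}\rangle e^{-f}d\sigma$ and absorbs the right-hand side by Cauchy--Schwarz into $\int_M\phi^2|A|^2c_0^2e^{-f}d\sigma+\int_M|\nabla\phi|^2|X^{\top}|^2e^{-f}d\sigma$, which yields
\begin{equation*}
I_f(\phi u,\phi u)\leq-k\int_M\phi^2u^2e^{-f}d\sigma+\int_M|\nabla\phi|^2\left(u^2+|X^{\top}|^2\right)e^{-f}d\sigma,
\end{equation*}
and the error term dies as the cutoff radius grows because $\Vol_f(M)<\infty$ and $|X|$ is constant. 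Finally, your exhaustion-and-limit step also needs the uniformity that the paper obtains from a compactness argument on the unit sphere $S$ of $V$ in $L^2_f$: one must produce a \emph{single} cutoff $\phi$ with $I_f(\phi u,\phi u)<0$ for every $u$ in the space simultaneously (and with $\dim\phi V=\dim V$); choosing a radius $R_u$ separately for each $u$, as written in your proposal, does not by itself yield a finite-dimensional space of compactly supported functions on which $I_f$ is negative definite.
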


As a consequence of Theorem \ref{it2}, we have 

\begin{corollary}\label{ic1} Let $M^n$ be a constant weighted mean curvature hypersurface with finite weighted volume and isometrically immersed in a shrinking gradient Ricci soliton $\overline M_f$. Denote by $\mathcal P_{\overline M_f}$ the set of parallel fields globally defined on $\overline M_f^{n+1}$ and let $\eta$ the unit normal field to $M$. If the unit function $1\not\in \{\langle X,\eta\rangle: \ {X\in\mathcal P_{\overline M_f}}\}$ and there exist a parallel field $X_0$ such that $\langle X_0,\eta\rangle\not\equiv0$, then
	$$\Ind_f(M)\geq1.$$
	Moreover, 
	$$\dim\{X\in \mathcal P_{\overline M_f}: \ \langle X,\eta\rangle\equiv0\}=\dim\mathcal P_{\overline M_f}-1$$
	whenever $\Ind_f(M)=1.$		
\end{corollary}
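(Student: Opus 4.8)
The plan is to derive this directly from part (i) of Theorem \ref{it2}, treating the statement as a purely linear-algebraic consequence once the index bound is in hand. First I would fix the notation $N=\dim\mathcal P_{\overline M_f}$ and $K=\dim\{X\in\mathcal P_{\overline M_f}:\ \langle X,\eta\rangle\equiv0\}$, so that the content of Theorem \ref{it2}(i) reads $\Ind_f(M)\geq N-K$. The first hypothesis of the corollary, namely that the constant function $1$ does not lie in $\{\langle X,\eta\rangle:\ X\in\mathcal P_{\overline M_f}\}$, is precisely the condition under which part (i) applies (and not part (ii)), so this inequality is available without further work.

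Next I would examine the evaluation map $\Phi:\mathcal P_{\overline M_f}\to C^{\infty}(M)$ given by $\Phi(X)=\langle X,\eta\rangle|_M$. Since $\eta$ is a fixed section and the metric pairing is bilinear, $\Phi$ is linear in $X$; its kernel is exactly the subspace $\{X\in\mathcal P_{\overline M_f}:\ \langle X,\eta\rangle\equiv0\}$ of dimension $K$. The second hypothesis supplies a parallel field $X_0$ with $\langle X_0,\eta\rangle\not\equiv0$, which says precisely that $\Phi$ is not the zero map. Consequently $X_0\notin\ker\Phi$, the image of $\Phi$ has dimension at least one, and the rank-nullity theorem gives $N-K=\dim\operatorname{im}\Phi\geq1$. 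Combining this with Theorem \ref{it2}(i) yields $\Ind_f(M)\geq N-K\geq1$, which is the first assertion.

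For the equality statement I would argue by sandwiching. Assuming $\Ind_f(M)=1$, the chain $1=\Ind_f(M)\geq N-K\geq1$ forces $N-K=1$, that is, $K=N-1$. Rewriting this in the original notation gives $\dim\{X\in\mathcal P_{\overline M_f}:\ \langle X,\eta\rangle\equiv0\}=\dim\mathcal P_{\overline M_f}-1$, as claimed.

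I do not expect a genuine obstacle here: all of the analytic and geometric difficulty is absorbed into Theorem \ref{it2}, and what remains is bookkeeping in finite-dimensional linear algebra. The only points requiring care are verifying that $\Phi$ is well defined and linear, so that its kernel is genuinely a subspace and rank-nullity applies, and confirming that the first hypothesis is exactly the trigger for part (i); once these are checked, both conclusions follow from the two inequalities above.
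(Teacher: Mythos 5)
Your proposal is correct and follows essentially the same route as the paper: the paper's proof likewise combines Theorem \ref{it2}(i) with the observation that the existence of $X_0$ forces $\dim\{X\in\mathcal P_{\overline M_f}:\ \langle X,\eta\rangle\equiv0\}\leq\dim\mathcal P_{\overline M_f}-1$ (your rank--nullity step, which the paper also uses implicitly via the kernel-and-image theorem in the proof of Theorem \ref{it2}), and then obtains the equality case by the same sandwich argument when $\Ind_f(M)=1$.
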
	

A necessary condition for equality to be achieved in the estimate (\ref{iequation8}) of Theorem \ref{it2}, is given by

\begin{theorem}\label{itheorem2}
		Let $\overline M_f$ be a shrinking gradient Ricci soliton with $\overline{\Ric}_f=kg$. Let $M^n$ be a constant weighted mean curvature hypersurface isometrically immersed in  $\overline M_f$. Denote by $\mathcal P_{\overline M_f}$ the set of parallel fields globally defined on $\overline M_f^{n+1}$ and $\eta$ the unit normal field to $M$. Suppose that $\Vol_f(M)<\infty$, $\Ind_f(M)<\infty$, $\dim \mathcal P_{\overline M_f}>0$, and
	\begin{equation*}\label{ieq5}
	\Ind_f(M)=\dim \mathcal P_{\overline M_f}-\dim\{X\in P_{\overline M_f}: \ \langle X,\eta\rangle\equiv0\}. 
	\end{equation*}
	\begin{enumerate}
		\item[(i)] If $\Ind_f(M)=\dim \mathcal P_{\overline M_f}$, $M$ is totally geodesic and  the bottom $\mu_1(M)$ of the spectrum of $f$-stability operator satisfies  $\mu_1(M)=-k.$ 
		\item[(ii)] If $\Ind_f(M)\neq\dim \mathcal P_{\overline M_f},$ either $M$ is diffeomorphic to the product of a Euclidian space with some other manifold or there is a circle action on $M$ whose orbits are not real homologous to zero.
	\end{enumerate} 
\end{theorem}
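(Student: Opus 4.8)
The plan is to reduce everything to the eigenvalue identity
\[
L_f\,u_X=k\,u_X,\qquad u_X:=\langle X,\eta\rangle,\quad X\in\mathcal P_{\overline M_f},
\]
which is the computation underlying Theorem \ref{it2}: since $\overline\nabla X=0$ and $\overline{\Ric}_f=kg$, a direct calculation (the curvature terms drop because a parallel field is annihilated by $\overline{\Ric}$ and by the curvature tensor) gives $\Delta_f u_X=-|A|^2u_X$, and adding $(|A|^2+k)u_X$ yields the claim. Because $X$ is parallel, $|X|$ is constant, so $u_X$ is bounded and lies in $L^2_f(M)$ as $\Vol_f(M)<\infty$. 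The linear map $X\mapsto u_X$ has kernel $\{X\in\mathcal P_{\overline M_f}:\langle X,\eta\rangle\equiv0\}$, so its image $V$ has dimension $\dim\mathcal P_{\overline M_f}-\dim\{X:\langle X,\eta\rangle\equiv0\}$, which the hypothesis equates with $\Ind_f(M)$. Every nonzero element of $V$ is a $(-k)$-eigenfunction of $-L_f$, hence a negative direction for $I_f$, and by the assumed equality these exhaust the negativity of $I_f$.

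For (i) we have $\dim\{X:\langle X,\eta\rangle\equiv0\}=0$, so $\dim V=\Ind_f(M)$ and (the admissible part of) $V$ is a maximal negative subspace consisting entirely of $(-k)$-eigenfunctions. I would first argue that this saturation leaves no spectrum of $-L_f$ below $-k$, so that, together with $-k$ being attained on $V$, the bottom of the spectrum is $\mu_1(M)=-k$. Granting this, let $\phi_1>0$ be a bottom eigenfunction; then $L_f\phi_1=k\phi_1$ forces $\Delta_f\phi_1=-|A|^2\phi_1$. Writing $\Delta_f\phi_1\,e^{-f}=\di(e^{-f}\nabla\phi_1)$ and integrating over $M$ by a weighted divergence theorem—legitimate because $\Vol_f(M)<\infty$, $M$ is proper by Proposition \ref{ip4}, and $\phi_1$ decays, so a cut-off exhaustion kills the boundary flux—gives $\int_M|A|^2\phi_1\,e^{-f}d\sigma=0$; since $\phi_1>0$ this yields $|A|^2\equiv0$, i.e.\ $M$ is totally geodesic. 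Once $M$ is totally geodesic, $-L_f=-\Delta_f-k$ and the constant function realizes $\mu_1(M)=-k$ directly, reconfirming the spectral claim.

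For (ii) we have $\dim\{X:\langle X,\eta\rangle\equiv0\}\ge1$, so there is a parallel $X_0\neq0$ on $\overline M_f$ with $\langle X_0,\eta\rangle\equiv0$. Then $X_0$ is everywhere tangent to $M$ and $\nabla^M_YX_0=(\overline\nabla_YX_0)^\top=0$, so $X_0$ is a nowhere-vanishing parallel field on $M$ of constant length, with harmonic dual $1$-form $\omega:=\langle X_0,\cdot\rangle$ and complete geodesic integral curves along which the flow acts isometrically. I would then invoke the standard dichotomy for a complete manifold carrying such a field: either the induced $\R$-action is free and proper, the orbits are lines, and $M$ is diffeomorphic to the product of the corresponding Euclidean factor with the quotient manifold; or some orbit closes into a circle and the flow descends to a circle action. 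In the latter case any closed orbit $\gamma$ satisfies $\int_\gamma\omega=|X_0|\,\mathrm{length}(\gamma)>0$ with $\omega$ closed, so $[\gamma]\neq0$ in $H_1(M;\R)$ and the orbits are not real homologous to zero.

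The step I expect to be the main obstacle is the spectral bookkeeping in (i): converting the saturation of the index bound into the statement $\mu_1(M)=-k$. Indeed $\Ind_f$ is defined through the volume-constrained class $\mathcal F$ of functions of zero weighted average, whereas $\mu_1$ is the unconstrained bottom of the spectrum, and the positive bottom eigenfunction $\phi_1$ never lies in $\mathcal F$; one must therefore argue carefully that no eigenvalue can sit strictly below $-k$ without producing an admissible negative direction that would violate the equality. A secondary technical point is the justification of the weighted divergence theorem on the noncompact $M$, which rests on the finite weighted volume, the properness from Proposition \ref{ip4}, and decay estimates for $\phi_1$.
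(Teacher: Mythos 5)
Your part (ii) coincides with the paper's argument: the hypothesis forces a nonzero parallel field $X_0$ on $\overline M_f$ with $\langle X_0,\eta\rangle\equiv0$, such a field is tangent to $M$ and parallel for the induced connection, and Welsh's dichotomy (Proposition \ref{proposition0}) gives the conclusion; the homological detail you add is already contained in Welsh's statement. The problems are in part (i), where there are two genuine gaps. The first you flag yourself: you never prove $\mu_1(M)=-k$, you only announce that you ``would argue'' that saturation of the index leaves no spectrum below $-k$. That is precisely the statement to be established, and the constrained-versus-unconstrained difficulty you correctly identify (the index is computed in the class $\mathcal F$ of zero weighted average, while $\mu_1$ is unconstrained, and one linear constraint cuts a negative subspace by at most one dimension) is not resolved anywhere in your text. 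The paper gets $\mu_1(M)\leq-k$ from the observation that the $u_j=\langle X_j,\eta\rangle$ are bounded, hence $L^2_f$ eigenfunctions with eigenvalue $-k$, and then asserts equality from the hypothesis $\Ind_f(M)=\dim\mathcal P_{\overline M_f}>0$; your proposal contains neither this nor any substitute.

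The second gap is the passage from $\mu_1(M)=-k$ to $|A|\equiv0$, and it is fatal as written. You integrate $\Delta_f\phi_1=-|A|^2\phi_1$ against $e^{-f}d\sigma$, where $\phi_1>0$ is the ground state furnished by Lemma \ref{proposition4}, and you kill the boundary flux because ``$\phi_1$ decays''. Lemma \ref{proposition4} is a Fischer-Colbrie--Schoen type statement: it produces a positive $C^2$ solution with no decay, no boundedness, and no $L^2_f$ membership whatsoever, so neither the finiteness of $\int_M|A|^2\phi_1e^{-f}d\sigma$ nor the vanishing of the cut-off boundary terms is justified. The paper's mechanism runs in the opposite direction and avoids exactly this: it shows $|\nabla u_j|\leq|A|$ pointwise, invokes Lemma \ref{lemma6} (finite weighted volume and $\mu_1(M)\neq-\infty$ imply $\int_M|A|^2e^{-f}d\sigma<\infty$) to conclude $u_j\in W^{1,2}_f$, and then uses the uniqueness clause of Lemma \ref{proposition4} --- valid precisely for $W^{1,2}_f$ solutions --- to identify $u_j$ with a constant multiple of the positive ground state; thus the \emph{bounded} function $u_j$ has a sign, and the identity $\int_M|A|^2u_je^{-f}d\sigma=0$ of Lemma \ref{lema5} (whose proof is the cut-off/Cauchy--Schwarz/dominated-convergence argument and itself requires $\int_M|A|^2e^{-f}d\sigma<\infty$) forces $|A|\equiv0$. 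The two ideas your proposal is missing are exactly these: the a priori integrability of $|A|^2$ from Lemma \ref{lemma6}, and the transfer of positivity to the controlled eigenfunction $u_j$ via $W^{1,2}_f$ uniqueness, instead of integrating against an abstract ground state that you cannot control at infinity.
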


\section{Properness and Finite Weighted Volume of a Constant Weighted Mean Curvature Hypersurface}

We will begin by proving Proposition \ref{ip4} which states that a noncompact complete hypersurface with finite weighted volume, weighted mean curvature vector bounded in norm, and isometrically immersed in a complete weighted manifold is proper. Furthermore, long after we will prove Proposition \ref{ip1}, it give some conditions to that the a proper noncompact complete hypersurface have finite weighted volume.

\begin{proof}[Proof of Proposition \ref{ip4}.] We supposed that $M$ is not proper. Thus, there exists a positive real number $R$ such that $\overline B_R^{\overline M}(o)\cap M$ is no compact in $M$, where $\overline B_R^{\overline M}(o)$ denotes the closure of the $B_R^{\overline M}(o)$. Then, for any $a>0$ sufficiently small with $a<2R$, there exists a sequence $\{p_k\}$ of the points in $B_R^{\overline M}(o)\cap M$ with $\dist_{M}(p_k,p_j)\geq a>0$ for any different $k$ and $j$. 
	
	Since $B_{a/2}^{M}(p_k)\cap B_{a/2}^{M}(p_j)=\emptyset$ for any $k\neq j$, we obtain $B_{a/2}^{M}(p_j)\subset B_{2R}^{\overline M}(o)$, where $B_{a/2}^{M}(p_k)$ and $B_{a/2}^{M}(p_j)$ denote the intrinsic balls of $M$ of radius $a/2$, center in $p_k$ and $p_j$, respectively. 
	
	Let $\{e_1,e_2,\ldots,e_n\}$ be a orthonormal basis of $T_xM$. If $x\in B_{a/2}^{M}(p_j)$, then the function extrinsic distance to $p_j$, denoted by $r_j(x)=\dist_{\overline M}(x,p_j)$, satisfies
	\begin{eqnarray*}
		\overline{\nabla}^2r_j(e_i,e_i)&=&\langle\overline{\nabla}_{e_i}\overline{\nabla} r_j,e_i\rangle=\langle \overline{\nabla}_{e_i}\nabla r_j,e_i\rangle+\langle \overline{\nabla}_{e_i}(\overline{\nabla}r_j)^{\perp},e_i\rangle\\
		&=&\langle {\nabla}_{e_i}\nabla r_j,e_i\rangle+\langle(\overline{\nabla}_{e_i}\nabla r_j)^{\perp},e_i\rangle-\langle A(e_i,e_i), \overline{\nabla}r_j\rangle\\
		&=&{\nabla}^2r_j(e_i,e_i)-\langle {\bf H}, \overline{\nabla}r_j\rangle.
	\end{eqnarray*}
	Observe that $\overline M$ has bounded locally geometry, this is, there exists positive real numbers $k$ and $i_0$ so that the sectional curvature of $\overline M$ is bounded above by $k$ and the injectivity radius of $\overline M$ is bounded below by $i_0$ in a neighborhood of a point $o\in \overline M$. By choosing $R>0$ such that $2R<\min\{ i_0,1/\sqrt k\}$, it follows from Hessian comparison of the distance (see Lemma 7.1, \cite{ColdingMinicozzi2011}), that
	$$\overline\nabla^2r_j(e_i,e_i)\geq-\sqrt k+\dfrac{1}{r_j}|e_i-\langle e_i,\overline{\nabla}r_j\rangle\overline{\nabla}r_j|^2$$
	in $\overline B_{2R}^{\overline M}(o).$ Hence, in $\overline B_{2R}^{\overline M}(o)\cap M$,	
	\begin{eqnarray*}
		\Delta r_j&=&\sum_{i=1}^{n}\nabla^2r_j(e_i,e_i)=\sum_{i=1}^{n}\overline\nabla^2r_j(e_i,e_i)+\langle {\bf H}, \overline{\nabla}r_j\rangle\\
		&\geq&\sum_{i=1}^{n}\left(-\sqrt k+\dfrac{1}{r_j}|e_i-\langle e_i,\overline{\nabla}r_j\rangle\overline{\nabla}r_j|^2\right)+\langle {\bf H}, \overline{\nabla}r_j\rangle+\langle (\overline{\nabla}f)^{\perp}, \overline{\nabla}r_j\rangle-\langle (\overline{\nabla}f)^{\perp}, \overline{\nabla}r_j\rangle\\	
		&=&-n\sqrt k+\dfrac{n}{r_j}-\dfrac{|\nabla r_j|^2}{r_j}+\langle {\bf H}_f, \overline{\nabla}r_j\rangle-\langle (\overline{\nabla}f)^{\perp}, \overline{\nabla}r_j\rangle\\
		&\geq&-n\sqrt k+\dfrac{n}{r_j}-\dfrac{|\nabla r_j|^2}{r_j}-|{\bf H}_f|-|(\overline\nabla f)^{\perp}|.
	\end{eqnarray*}
	
	By hypothesis, the norm of ${\bf H}_f$ is bounded above  and $\sup_{p\in\overline B_{2R}^{\overline M}(o)}|\overline\nabla f(p)|<\infty$ for each $R>0$. Thus,
	\begin{eqnarray*}
		\Delta r_j
		&\geq&-n\sqrt k+\dfrac{n}{r_j}-\dfrac{|\nabla r_j|^2}{r_j}-\sup_{p\in B_{2R}^{\overline M}(o)\cap M}|{\bf H}_f(p)|-\sup_{p\in B_{2R}^{\overline M}(o)\cap M}|(\overline\nabla f(p))^{\perp}|\\
		&\geq&\dfrac{n}{r_j}-\dfrac{|\nabla r_j|^2}{r_j}-C,
	\end{eqnarray*}
	where $$C=n\sqrt k+\sup_{p\in B_{2R}^{\overline M}(o)\cap M}|{\bf H}_f(p)|+\sup_{p\in \overline B_{2R}^{\overline M}(o)}|\overline\nabla f(p)|.$$
	Therefore, in $B_{2R}^{\overline M}(o)\cap M$,
	$$\Delta r_j^2=2r_j\Delta r_j+2|\nabla r_j|^2\geq2r_j\left(\dfrac{n}{r_j}-\dfrac{|\nabla r_j|^2}{r_j}-C\right)+2|\nabla r_j|^2
	= 2n-2C r_j.$$
	By choosing $a<\min\{2n/C,2R\},$ we have for $0<\zeta\leq a/2,$
	\begin{eqnarray}\label{e1}
	\int_{B_{\zeta}^{M}(p_j)}(2n-2C r_j)\,d\sigma&\leq&\int_{B_{\zeta}^{M}(p_j)}\Delta r^2_j\,d\sigma=\int_{\partial B_{\zeta}^{M}(p_j)}\langle\nabla r^2_j,\nu\rangle\,d\sigma\nonumber\\
	&\leq&\int_{\partial B_{\zeta}^{M}(p_j)} 2r_j|\nabla r_j||\nu| dA\leq\int_{\partial B_{\zeta}^{M}(p_j)} 2r_j\,dA\\
	&\leq& 2\zeta A_j	
	(\zeta)\nonumber,
	\end{eqnarray}
	where $\nu$ denotes the unit normal vector field pointing out of $\partial B_{\zeta}^{M}(p_j)$ and $A_j(\zeta)$ denotes the area of $\partial B_{\zeta}^{M}(p_j)$. By using co-area formula in (\ref{e1}), we have 
	\begin{eqnarray*}
		\int_{B_{\zeta}^{M}(p_j)}(n-C r_j)\,d\sigma&=&\int_0^\zeta\int_{\partial B_{t}^{M}(p_j)}(n-C r_j)|\nabla r_j|^{-1}dA_t\,dt\\	
		&\geq&\int_0^\zeta\int_{\partial B_{t}^{M}(p_j)}(n-C r_j)dA_t\,dt\\
		&\geq&\int_0^\zeta(n-C t)\int_{\partial B_{t}^{M}(p_j)}dA_t\,dt\\
		&\geq&(n-C\zeta)V_j(\zeta),
	\end{eqnarray*}
	where $V_j(\zeta)$ denotes the volume of $B_{\zeta}^{M}(p_j)$.	Therefore,
	\begin{eqnarray}
	(n-C\zeta)V_j(\zeta)\leq\zeta A_j(\zeta).
	\end{eqnarray}
	Since
	$$V_j'(\zeta)=\dfrac{d}{d\sigma}\int_{B_{\zeta}^{M}(p_j)}\,d\sigma=\dfrac{d}{d\sigma}\int_0^\zeta\int_{\partial B_{t}^{M}(p_j)}|\nabla r_j|^{-1}dA_t\,dt=\int_{\partial B_{\zeta}^{M}(p_j)}|\nabla r_j|^{-1}dA\geq A_j(\zeta),$$
	then
	$$(n-C\zeta)V_j(\zeta)\leq \zeta V_j'(\zeta).$$
	Thus,
	\begin{equation}\label{e2}
	\dfrac{d}{d\sigma}\log V_j(\zeta)=\dfrac{V_j'(\zeta)}{V_j(\zeta)}\geq\dfrac{n}{\zeta}-C.
	\end{equation}
	By integrating (\ref{e2}) from $\varepsilon>0$ to $\zeta$, we obtain
	$$\log V_j(\zeta)-\log V_j(\varepsilon)\geq n\log\zeta-n\log\varepsilon-C(\zeta-\varepsilon),$$
	that is
	$$\dfrac{V_j(\zeta)}{V_j(\varepsilon)}\geq\dfrac{\zeta^n}{\varepsilon^n}e^{-C(\zeta-\varepsilon)}.$$
	Now observing that $$\lim_{\varepsilon\rightarrow0^+}\dfrac{V_j(\varepsilon)}{\varepsilon^n}=\omega_n,$$ we obtain 
	$$V_j(\zeta)\geq\omega_n\zeta^ne^{-C\zeta}$$
	for $0<\zeta\leq a/2.$
	Thus, we conclude that
	\begin{eqnarray*}
		\Vol_f(M)&=&\int_{M}e^{-f}\,d\sigma\geq\sum_{j=1}^{\infty}\int_{B_{a/2}^{M}(p_j)}e^{-f}\,d\sigma\\
		&\geq&\left(\inf_{\overline B_{2R}^{\overline M}(o)}e^{-f}\right)\sum_{j=1}^{\infty}V_j(a/2)=+\infty.
	\end{eqnarray*}
This contradicts the assumption of the finite weighted volume of $M$. Therefore, $M^n$ is a proper hypersurface of $\overline M^m_f.$
\end{proof}

\begin{definition}
	The function $f\in C^{\infty}(\overline M)$ is said {\it convex} if the Hessian of $f$ is non-negative, this is $\overline{\nabla}^2f\geq0.$ 
\end{definition}

\begin{remark}\label{remark1} Let $\overline M_f$ be a  complete gradient Ricci soliton satisfying
	$\overline{\Ric}_f=\dfrac{1}{2} g.$ In this case, Cao and Zhou \cite{CaoZhou2010} showed that by translating $f$ 
	\begin{equation}
	\label{ae} \overline R+|\overline\nabla f|^2-f=0 \ \ \ \ \text{e}\ \ \ \ \ \overline R\geq0.
	\end{equation}	
	Thus, it follows from the equations in (\ref{ae}) that
	\begin{equation}
	\label{equ1}
	|\overline\nabla f|^2\leq f.
	\end{equation}
	In addition, there exists constants $c_1,c_2\in\R$ such that
	\begin{equation}
	\label{equa3} \dfrac{1}{4}(r(x)-c_1)^2\leq f(x)\leq \dfrac{1}{4}(r(x)+c_2)^2,
	\end{equation}
	where $r(x)=\dist_{\overline M}(x,o)$ is the distance of $x\in \overline M$ to a fixed point $o\in \overline M$. The constant $c_2$ depends only on the dimension of the manifold and $c_1$ depends on the geometry of $g$ on unit ball center in $o$ (see \cite{CaoZhou2010}, Lemma 2.1, Lemma 2.2 and Theorem 1.1).  In \cite{MunteanuWang2012}, Munteanu and Wang showed that the inequalities in (\ref{equa3}) are true only assuming that $\overline{\Ric}_f\geq\dfrac{1}{2}g$ and $|\overline{\nabla}f|^2\leq f.$
\end{remark}

\begin{proof}[Proof of Proposition \ref{ip1}.]
By hypothesis, $\overline{\Ric}_f=\dfrac{1}{2}g.$	Thus, it follows from Remark \ref{remark1} that
	$$\overline R+|\overline\nabla f|^2-f=0, \ \ \ \ \ \overline R+\overline{\Delta} f=\dfrac{m}{2}, \ \ \ \ \ \text{and} \ \ \ \ \ \overline R\geq0,$$
	and we have that
	$$\overline{\Delta}f-|\overline{\nabla} f|^2+f=\dfrac{m}{2} \ \ \ \ \text{and} \ \ \ \ |\overline\nabla f|^2\leq f.$$

By being $f$ a convex function, we obtain 
	\begin{eqnarray*}
		\Delta_ff+f&=&\Delta f-|\nabla f|^2+f=\overline\nabla^2f(e_i,e_i)+\langle{\bf H},\overline\nabla f^{\perp}\rangle-|\nabla f|^2+f\\
		&=&\overline{\Delta}f-\sum_{i=n+1}^m\overline{\nabla}^2 f(\eta_i,\eta_i)+\langle{\bf H}_f,\overline\nabla f^{\perp}\rangle-|\overline{\nabla}f^{\perp}|^2-|\nabla f|^2+f\\
		&=&
		\overline{\Delta}f-\sum_{i=n+1}^m\overline{\nabla}^2 f(\eta_i,\eta_i)+\langle{\bf H}_f,\overline\nabla f^{\perp}\rangle-|\overline{\nabla}f|^2+f\\
		&\leq&\dfrac{m}{2}+C,\\
	\end{eqnarray*}      	
	where $C=\sup_{x\in M}\langle{\bf H}_f,\overline\nabla f^{\perp}\rangle<\infty.$  
	
	Observe that 
	\begin{equation}\label{equa33}
	\dfrac{1}{4}(r(x)-c)^2\leq f(x)\leq \dfrac{1}{4}(r(x)+c)^2,
	\end{equation}
	where $c$ is a constant (see Remark \ref{remark1}, inequalities in (\ref{equa3})). Hence, we can conclude that $f$ is proper on $\overline M$. Since, by hypothesis, $x:M\rightarrow\overline M_f$ is a proper immersion, then $f|_M:M\rightarrow\R$ is a proper function. 
	
	Therefore, it follows from Theorem 1.1 de \cite{ChengZhou2013} that $M$ has finite weighted volume and Euclidean volume growth of the sub-level set of the potential function $f$.
\end{proof}

\section{The $f$-stability Index of the  Constant Weighted Mean Curvature Hypersurfaces}

In this section, we will prove Theorem \ref{it2}, Corollary \ref{ic1}, and Theorem \ref{itheorem2}. Which are results about the $f$-stability index of a constant weighted mean curvature hypersurface with finite weighted volume and isometrically immersed in a gradient Ricci soliton that admits at least one parallel field globally defined. For this, we are going to give some definitions and known results.

\begin{definition}
	We say that a vector field $X\in T\overline M$ is {\it parallel} if $$\overline\nabla_YX=0$$ for all vector fields $Y\in T\overline M.$  
\end{definition}

\begin{proposition}\label{proposition1}
	Let $\overline M_f^{n+1}$ be a  gradient Ricci soliton satisfying $\overline{\Ric}_f=kg$ and $X$ a parallel vector field on $\overline M_f^{n+1}$. If $M^n$ is a constant weighted mean curvature hypersurface immersed isometrically immersed  in  $\overline M_f^{n+1},$ then
	$$L_f\langle X,\eta\rangle=k\langle X,\eta\rangle$$
	and
	$$\Delta_f\langle X,\eta\rangle^{2}=-2|A|^{2}\langle X,\eta\rangle^{2}+2| AX^{\top}|^2,$$
	where $\eta$ is the unit normal vector field to $M$.	
\end{proposition}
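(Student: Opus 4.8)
The plan is to establish the single pointwise identity $\Delta_f\langle X,\eta\rangle=-|A|^2\langle X,\eta\rangle$; once this is in hand, both displayed formulas follow quickly. Write $u=\langle X,\eta\rangle$ and decompose $X=X^{\top}+u\eta$ into its tangent and normal parts along $M$. First I would compute the gradient: since $X$ is parallel, $\overline{\nabla}_{e_i}X=0$, so for a local orthonormal frame $\{e_i\}$ of $M$ the Weingarten relation $\overline{\nabla}_{e_i}\eta=-Ae_i$ gives $\nabla_{e_i}u=\langle X,\overline{\nabla}_{e_i}\eta\rangle=-\langle AX^{\top},e_i\rangle$, i.e. $\nabla u=-AX^{\top}$ and hence $|\nabla u|^2=|AX^{\top}|^2$.

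Next I would compute $\Delta u$ in a frame that is geodesic at the point $p$. Differentiating once more and using the Gauss formula $\overline{\nabla}_{e_j}e_k=h_{jk}\eta$ at $p$, the normal component produces $-|A|^2u$ while the tangential component contracts the divergence of the second fundamental form against $X^{\top}$; concretely $\Delta u=-\sum_k\big(\sum_j\nabla_{e_j}h_{jk}\big)\langle X,e_k\rangle-|A|^2u$, where $h_{jk}=\langle Ae_j,e_k\rangle$. The divergence of $A$ is then rewritten by Codazzi: summing $\langle\overline{R}(e_j,e_k)e_j,\eta\rangle=(\nabla_{e_j}h)(e_k,e_j)-(\nabla_{e_k}h)(e_j,e_j)$ over $j$ gives $\sum_j\nabla_{e_j}h_{jk}=\nabla_{e_k}H-\overline{\Ric}(e_k,\eta)$, where $H=\tr A$. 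The sign of the curvature term here is the one place demanding care, because the contraction $\sum_j\langle\overline{R}(e_j,e_k)e_j,\eta\rangle$ equals $-\overline{\Ric}(e_k,\eta)$ rather than $+\overline{\Ric}(e_k,\eta)$; getting this wrong flips a term and destroys the final cancellation. This yields $\Delta u=-\langle\nabla H,X^{\top}\rangle+\overline{\Ric}(X^{\top},\eta)-|A|^2u$.

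Then I would feed in the three structural hypotheses and watch the curvature terms cancel. Because $X$ is parallel, $\overline{R}(\,\cdot\,,\,\cdot\,)X=0$, so $\overline{\Ric}(X,\cdot)=0$ and therefore $\overline{\Ric}(X^{\top},\eta)=-u\,\overline{\Ric}(\eta,\eta)$. Because $M$ has constant weighted mean curvature, $H+\langle\overline{\nabla}f,\eta\rangle=-H_f$ is constant, so $\nabla H=-\nabla\langle\overline{\nabla}f,\eta\rangle$; expanding the right-hand side with Weingarten gives $\nabla_{e_k}H=-\overline{\nabla}^2f(e_k,\eta)+\langle A\nabla f,e_k\rangle$. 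Finally the soliton equation $\overline{\nabla}^2f=kg-\overline{\Ric}$ turns $\overline{\nabla}^2 f(X^{\top},\eta)$ into $-\overline{\Ric}(X^{\top},\eta)=u\,\overline{\Ric}(\eta,\eta)$. Substituting all of this and passing to the weighted Laplacian, $\Delta_f u=\Delta u-\langle\nabla f,\nabla u\rangle=\Delta u+\langle A\nabla f,X^{\top}\rangle$, the term $\langle A\nabla f,X^{\top}\rangle$ and both occurrences of $u\,\overline{\Ric}(\eta,\eta)$ cancel, leaving exactly $\Delta_f u=-|A|^2u$. Since $\overline{\Ric}_f(\eta,\eta)=k$, this is equivalent to $L_f u=\Delta_f u+(|A|^2+\overline{\Ric}_f(\eta,\eta))u=ku$, which is the first assertion.

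For the second identity I would simply apply the weighted product rule $\Delta_f(u^2)=2u\,\Delta_f u+2|\nabla u|^2$, which holds because $\Delta_f(u^2)=\Delta(u^2)-\langle\nabla f,\nabla(u^2)\rangle$ and $\Delta(u^2)=2u\Delta u+2|\nabla u|^2$. Inserting $\Delta_f u=-|A|^2u$ together with $|\nabla u|^2=|AX^{\top}|^2$ from the first step gives $\Delta_f\langle X,\eta\rangle^2=-2|A|^2\langle X,\eta\rangle^2+2|AX^{\top}|^2$. The only genuine obstacle is the curvature-sign bookkeeping flagged above; every other step is a routine but sign-sensitive computation, and the pleasant feature of the argument is that the parallelism of $X$, the constancy of $H_f$, and the soliton identity conspire to annihilate all the Hessian and Ricci contributions.
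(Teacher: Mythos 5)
Your proposal is correct and follows essentially the same route as the paper's proof: Weingarten to get $\nabla u=-AX^{\top}$, Codazzi to compute $\Delta u$, the constant weighted mean curvature relation to handle $\nabla H$, the soliton equation to cancel the Hessian and Ricci contributions, and the product rule $\Delta_f u^2=2u\Delta_f u+2|\nabla u|^2$ to conclude. The only cosmetic difference is that the paper cancels $\overline{\Ric}(X^{\top},\eta)+\overline{\nabla}^2 f(X^{\top},\eta)$ in one stroke as $\overline{\Ric}_f(X^{\top},\eta)=k\langle X^{\top},\eta\rangle=0$, whereas you route through the correct but unnecessary extra observation that $\overline{\Ric}(X,\cdot)=0$ for a parallel field $X$.
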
	

\begin{proof}
	Let $\{e_1,e_2,\ldots,e_n\}$ be a geodesic orthonormal frame on $M$. By hypothesis, $H_f=C$, this is $H=\langle\overline{\nabla }f,\eta\rangle+C,$ where $C$ is a real constant. Thus,
	\begin{eqnarray}\label{equation12}
		\nabla H&=&\sum_{i=1}^ne_i(H)e_i=\sum_{i=1}^ne_i( \langle \overline{\nabla}f,\eta\rangle) e_i=\sum_{i=1}^n\langle \overline{\nabla}_{e_i}\overline{\nabla}f,\eta\rangle e_i+\sum_{i=1}^n\langle\overline{\nabla }f,\overline{\nabla }_{e_i}\eta\rangle e_i\nonumber\\
		&=&\sum_{i=1}^n\langle \overline{\nabla}_{e_i}\overline{\nabla}f,\eta\rangle e_i-\sum_{i=1}^n\langle A(e_i,e_j),\eta\rangle\langle\overline{\nabla }f,e_j\rangle e_i.
	\end{eqnarray}
	To $u=\langle X,\eta\rangle$ and $a_{ij}=\langle Ae_i,e_j\rangle$, we have 
	\begin{equation}\label{equation11}
	\nabla u=\sum_{j=1}^ne_j(u)e_j=\sum_{j=1}^n\langle\overline{\nabla }_{e_j}\eta,X\rangle e_j=-\sum_{i,j=1}^na_{ji}\langle e_i,X\rangle e_j.
	\end{equation}
	It follows from (\ref{equation12}) and (\ref{equation11}),
	$$\langle \nabla H,X\rangle=\sum_{i=1}^n\langle \overline{\nabla}_{e_i}\overline{\nabla}f,\eta\rangle \langle e_i,X\rangle-\sum_{i,j=1}^na_{ij}\langle\overline{\nabla }f,e_j\rangle \langle e_i,X\rangle=\langle \overline{\nabla}_{ {X^{\top}}}\overline{\nabla}f,\eta\rangle +\langle\overline{\nabla}f,\nabla u\rangle.$$
	Moreover, 
	$$e_i(u)=\langle\overline{\nabla }_{e_i}\eta,X\rangle=-\sum_{j=1}^na_{ij}\langle e_j,X\rangle$$
	and, by deriving the previous expression and observing that $\nabla_{e_k}e_j=0$, we obtain
	$$e_k(e_i(u))=-\sum_{j=1}^n\left(a_{ij,k}\langle e_j,X\rangle+a_{ij}\langle X,\overline{\nabla}_{e_k}e_j\rangle\right)=-\sum_{j=1}^na_{ij,k}\langle e_j,X\rangle-\sum_{j=1}^na_{ij}a_{kj}\langle X,\eta\rangle.$$
	It follows from Codazzi equation that
	$$\overline R(e_j,e_k)e_i^{\perp}=\left(a_{ki,j}-a_{ji,k}\right)\eta,$$
	that is
	$$\langle \overline R(e_j,e_k)e_i,\eta\rangle=a_{ki,j}-a_{ji,k}.$$
	Hence,
	\begin{eqnarray*}
		e_k(e_i(u))&=&-\sum_{j=1}^na_{ki,j}\langle e_j,X\rangle+\sum_{j=1}^n\langle \overline R(e_j,e_k)e_i,\eta\rangle\langle e_j,X\rangle-\sum_{j=1}^na_{ij}a_{kj}\langle X,\eta\rangle
	\end{eqnarray*}
	and
	\begin{eqnarray}\label{equation13}
		\Delta u&=&\sum_{i=1}^ne_i(e_i(u))=-\sum_{i,j=1}^na_{ii,j}\langle e_j,X\rangle+\sum_{i,j=1}^n\langle \overline R(e_j,e_i)e_i,\eta\rangle\langle e_j,X\rangle-\sum_{i,j=1}^na_{ij}a_{ij}\langle X,\eta\rangle\nonumber\\
		&=&\langle\nabla H,X\rangle+\sum_{i=1}^n\langle\overline R(X^{\top},e_i)e_i,\eta\rangle-|A|^{2}\langle X,\eta\rangle\nonumber\\
		&=&\langle \overline{\nabla}_{ {X^{\top}}}\overline{\nabla}f,\eta\rangle +\langle\overline{\nabla}f,\nabla u\rangle+\overline{Ric}(X^{\top},\eta)-|A|^{2}u.
	\end{eqnarray}
	On the other hand,
		\begin{equation}\label{equation14}0=\dfrac{1}{2}\langle X^{\top},\eta\rangle=\overline{Ric}_f(X^{\top},\eta)=\overline{Ric}(X^{\top},\eta)+\overline{\nabla}^{2}f(X^{\top},\eta)=\overline{Ric}(X^{\top},\eta)+\langle \overline{\nabla}_{ {X^{\top}}}\overline{\nabla}f,\eta\rangle.
	\end{equation}
	Therefore, it follows from (\ref{equation13}) and (\ref{equation14}),
	\begin{eqnarray}\label{equ11}
	\Delta_fu&=&\Delta u-\langle\nabla f,\nabla u\rangle\nonumber\\
	&=&\langle \overline{\nabla}_{ {X^{\top}}}\overline{\nabla}f,\eta\rangle +\langle\overline{\nabla}f,\nabla u\rangle-\langle \overline{\nabla}_{ {X^{\top}}}\overline{\nabla}f,\eta\rangle-|A|^{2}u-\langle\nabla f,\nabla u\rangle\nonumber\\
	&=&-|A|^{2}u,
	\end{eqnarray}
by implying that
	$$ L_fu=\Delta_fu+|A|^2u+ku=ku.	$$	
	Moreover, by using the equality (\ref{equ11}), we have that
	$$\Delta_fu^{2}=2u\Delta_fu+2|\nabla u|^{2}=-2|A|^{2}u^{2}+2|AX^{\top}|^{2}.$$
\end{proof}

Let $\mathcal P_{\overline M}$ be the set of all tangent vector fields to $\overline M$ which are 
parallel and globally defined.

\begin{example}
	O shrinking Gaussian soliton $\left(\R^{n+1},g_{can},e^{-|x|^2/2}\right)$ has exactly $n+1$ 
	parallel vector fields linearly independent and globally defined on $\R^{n+1}$.	Other example is the shrinking cylinder soliton    $\left(\s^{n+1-k}\times\R^k,g,e^{-f}\right)$, $k\geq1$, with metric $$g=2(n-k-1)g_{\s^{n-k}}+g_{\R^{k}}$$ and potential function $$f(\theta,x)=\frac{|x|^2}{2}, \ \theta\in \s^{n-k}, \ x\in\R^k.$$ In this example, we have that $$\dim\mathcal P_{\s^{n+1-k}\times\R^k}=k.$$
\end{example}

\begin{definition}
	The vector subspace of $C^{\infty}(M)$ {\it generated} by $E\subset C^{\infty}(M)$, denoted by $\Span E,$ is the set of all the linear combinations of the elements of $E$.
\end{definition}

\begin{lemma}\label{lema2} Let $\overline M_f^{n+1}$ be a shrinking gradient Ricci soliton satisfying $\overline{\Ric}_f=kg$ and $M^n$ be a constant weighted mean curvature hypersurface isometrically immersed  in  $\overline M_f^{n+1}.$ If $M$ is compact, then $I_f$ is negative defined in the $$\Span\{1,\langle X,\eta\rangle: \ X \in\mathcal P_{\overline M_f}\}.$$
Moreover,
$$\int_M|A|^2\langle X,\eta\rangle e^{-f}d\sigma=0.$$	
\end{lemma}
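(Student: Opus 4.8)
The plan is to reduce the statement to the positivity of one explicit quadratic form. Write a generic element of the span as $w=c_0+v$, where $v=\sum_\alpha c_\alpha\langle X_\alpha,\eta\rangle$ and $\{X_\alpha\}$ is a basis of $\mathcal P_{\overline M_f}$. First I would record the two ``eigenvalue'' relations on which everything rests. Since $\overline{\Ric}_f=kg$ gives $\overline{\Ric}_f(\eta,\eta)=k$ and $\Delta_f 1=0$, one has $L_f 1=|A|^2+k$; and by Proposition~\ref{proposition1}, $L_f\langle X_\alpha,\eta\rangle=k\langle X_\alpha,\eta\rangle$, so by linearity $L_fv=kv$ and hence $L_fw=c_0(|A|^2+k)+kv$.

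Next I would establish the ``Moreover'' identity first, since it is the algebraic key that cancels the mixed terms. Equation (\ref{equ11}) in the proof of Proposition~\ref{proposition1} reads $\Delta_f\langle X_\alpha,\eta\rangle=-|A|^2\langle X_\alpha,\eta\rangle$. Because $M$ is compact without boundary and $\Delta_f g\,e^{-f}=\di(e^{-f}\nabla g)$ is in divergence form, the divergence theorem gives $\int_M\Delta_f g\,e^{-f}d\sigma=0$ for every smooth $g$; applied to $g=\langle X_\alpha,\eta\rangle$ this yields exactly $\int_M|A|^2\langle X_\alpha,\eta\rangle e^{-f}d\sigma=0$, and by linearity $\int_M|A|^2v\,e^{-f}d\sigma=0$ for every $X\in\mathcal P_{\overline M_f}$.

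Now I would expand $I_f(w,w)=-\int_M wL_fw\,e^{-f}d\sigma$. Substituting $L_fw=c_0(|A|^2+k)+kv$ and multiplying out, the only term carrying the factor $v|A|^2$ integrates to zero by the identity just proved, and I obtain
\begin{equation*}
I_f(w,w)=-\left[c_0^2\int_M(|A|^2+k)e^{-f}d\sigma+2kc_0\int_M v\,e^{-f}d\sigma+k\int_M v^2 e^{-f}d\sigma\right].
\end{equation*}
Writing $P=\int_M(|A|^2+k)e^{-f}d\sigma$, $Q=\int_M v\,e^{-f}d\sigma$ and $\|v\|^2=\int_M v^2 e^{-f}d\sigma$, the bracket is $Pc_0^2+2kQc_0+k\|v\|^2$, a quadratic in $c_0$ with positive leading coefficient $P>0$; completing the square bounds it below by $k\bigl(\|v\|^2-kQ^2/P\bigr)$.

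It then remains to prove the bracket is strictly positive whenever $w\neq0$, which is the main point. Since $|A|^2\ge0$ one has $P\ge k\Vol_f(M)$, whence $kQ^2/P\le Q^2/\Vol_f(M)$, while Cauchy--Schwarz gives $Q^2\le\Vol_f(M)\,\|v\|^2$; combining, $\|v\|^2-kQ^2/P\ge0$, so the bracket is $\ge0$ and $I_f(w,w)\le0$. The delicate part is the equality case: the bracket vanishes only if simultaneously $c_0=-kQ/P$, Cauchy--Schwarz is an equality (forcing $v$ constant), and $P=k\Vol_f(M)$ (forcing $|A|\equiv0$, i.e.\ $M$ totally geodesic; this is also consistent with $\Delta_fv=-|A|^2v$ applied to a nonzero constant $v$). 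If $v\equiv\lambda$ is a nonzero constant, then $P=k\Vol_f(M)$, $Q=\lambda\Vol_f(M)$, so $c_0=-\lambda$ and $w=c_0+v\equiv0$; if instead $v\equiv0$, the bracket reduces to $Pc_0^2$, which vanishes only for $c_0=0$, again $w\equiv0$. Hence $I_f(w,w)<0$ for every nonzero $w$ in the span, which is the asserted negative definiteness. I expect this equality analysis—showing degeneracy can occur only when $w$ is itself zero—to be the only subtle step, the remainder being the two identities from Proposition~\ref{proposition1} together with routine integration by parts on the compact manifold $M$.
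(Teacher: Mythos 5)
Your proof is correct, and it is built from the same ingredients as the paper's: the relations $L_f1=|A|^2+k$ and $L_f\langle X,\eta\rangle=k\langle X,\eta\rangle$ from Proposition \ref{proposition1}, integration over the compact $M$ to obtain the ``Moreover'' identity, and an expansion of $I_f(c_0+v,c_0+v)$ in which the mixed term $c_0\int_M|A|^2v\,e^{-f}d\sigma$ drops out. The genuine difference is how positivity of the resulting bracket $Pc_0^2+2kQc_0+k\int_Mv^2e^{-f}d\sigma$ (in your notation) is established. You complete the square in $c_0$, estimate $kQ^2/P\leq Q^2/\Vol_f(M)$, invoke Cauchy--Schwarz, and then run an equality-case analysis; this is where your argument is most delicate, since the implication ``equality forces $P=k\Vol_f(M)$'' needs $Q\neq0$, and you must fall back on $\Delta_fv=-|A|^2v$ to dispose of a nonzero constant $v$ --- your case split does cover everything, but only just. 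The paper avoids all of this by recognizing the bracket as the integral of a pointwise sum of squares,
$$Pc_0^2+2kQc_0+k\int_Mv^2e^{-f}d\sigma=\int_M\left[c_0^2|A|^2+k(c_0+v)^2\right]e^{-f}d\sigma,$$
so that $I_f(w,w)=-c_0^2\int_M|A|^2e^{-f}d\sigma-k\int_Mw^2e^{-f}d\sigma<0$ for every $w=c_0+v\not\equiv0$, with no inequalities and no equality discussion needed; here it is essential that $k>0$ because the soliton is shrinking, a fact both arguments use. So your route is sound but strictly longer. The one point where it is arguably more careful is that you make explicit that the ``Moreover'' identity is what kills the cross term, whereas the paper uses this tacitly when it replaces the cross term $2c_0L_f\langle X,\eta\rangle$ by $2kc_0\langle X,\eta\rangle$ under the integral.
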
	

\begin{proof}
	It follows from Proposition \ref{proposition1} that the function $u=\langle X,\eta\rangle$, with $X\in\mathcal P_{\overline M_f}$, satisfies $L_fu=ku.$ Thus, since $M$ is compact, we have 	
	\begin{eqnarray*}
		\int_Mkue^{-f}d\sigma&=&\int_ML_fue^{-f}d\sigma=\int_M\left(\Delta_fu+|A|^2u+ku\right)e^{-f}d\sigma\\
		&=&\int_M|A|^2ue^{-f}d\sigma+\int_Mkue^{-f}d\sigma.
	\end{eqnarray*}
	Therefore,
	\begin{equation}\label{eeee7}
	\int_M|A|^2ue^{-f}d\sigma=0.
	\end{equation}	
	Observe that	
	\begin{equation*}\label{eeee8}
	I_f(1,1)=-\int_M1L_f1e^{-f}d\sigma=-\int_M\left(|A|^2+k\right)e^{-f}d\sigma
	\end{equation*}	
	and
	\begin{equation*}\label{eeee9}I_f(u,u)=-\int_MuL_fue^{-f}\,d\sigma=-k\int_Mu^2e^{-f}\,d\sigma.
	\end{equation*}	
	Therefore,
	\begin{eqnarray*}
		I_f(c_0+u,c_0+u)&=& I_f(c_0,c_0)+I_f(u,u)+2I_f(c_0,u)\\
		&=& -\int_M\left[c_0^2|A|^2+kc_0^2+ku^2+2c_0\left(\Delta_fu+|A|^2u+ku\right)\right]e^{-f}d\sigma\\
		&=& -\int_M\left[c_0^2|A|^2+kc_0^2+ku^2+2c_0ku\right]e^{-f}d\sigma\\
		&=& -c_0^2\int_M|A|^2e^{-f}d\sigma-k\int_M(c_0+u)^{2}e^{-f}d\sigma<0,
	\end{eqnarray*}	
	where $u=\langle X,\eta\rangle.$ This shows that $I_f$ is negative defined in $\Span\{1,\langle X,\eta\rangle: \ X\in\mathcal P_{\overline M_f}\}.$
\end{proof}

\begin{remark}
	Supposing that $M^n$ has finite weighted volume and putting $\alpha=-\int_M\langle X,\eta\rangle e^{-f}d\sigma,$ we can conclude that   
	$$\int_M(\alpha+\langle X,\eta\rangle )e^{-f}d\sigma=0.$$
	Therefore, $$\mathcal F\cap\Span\{1,\langle X,\eta\rangle: X \in\mathcal P_{\overline M_f}\}\not=\emptyset.$$ 
\end{remark}

Now, let's look at the noncompact manifolds. For this, we will consider the functions that have compact support in $M$. 

\begin{proposition}\label{proposition3} Let $\overline M_f^{n+1}$ be a weighted manifold and $M^n$ be a noncompact hypersurface isometrically immersed  in  $\overline M_f^{n+1}.$ Then
	$$I_f(\phi u,\phi u)=-\int_M\phi^2uL_fue^{-f}d\sigma+\int_M|\nabla\phi|^2u^2e^{-f}d\sigma,$$
where  $\phi\in C_c^{\infty}(M)$, $u\in C^{\infty}(M)$, and $\eta$ denotes the unit normal field on $M$. Moreover, if $\overline{\Ric}_f=kg$ and $M$ has constant weighted mean curvature, then
	\begin{equation}\label{equation7}\int_M\phi^2|A|^2\langle X,\eta\rangle e^{-f}d\sigma=-2\int_M\phi \langle\nabla\phi,AX^{\top}\rangle e^{-f}d\sigma,
	\end{equation}
	where $X$ is a parallel vector field on $\overline M_f^{n+1}$.
\end{proposition}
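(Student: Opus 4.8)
The plan is to establish the first identity by a straightforward integration by parts using the self-adjointness of the weighted Laplacian, and then to derive the second identity by making a judicious choice in the first and invoking Proposition \ref{proposition1}.

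For the first identity, I would start from the definition $I_f(\phi u,\phi u)=-\int_M(\phi u)L_f(\phi u)e^{-f}d\sigma$ and expand $L_f(\phi u)=\Delta_f(\phi u)+(|A|^2+\overline{\Ric}_f(\eta,\eta))\phi u$. The key computation is the product rule for the weighted Laplacian, namely $\Delta_f(\phi u)=\phi\Delta_f u+u\Delta_f\phi+2\langle\nabla\phi,\nabla u\rangle$. Substituting this and grouping terms, the quadratic form becomes $-\int_M\phi^2 u L_f u\,e^{-f}d\sigma$ plus the extra terms $-\int_M\phi u\big(u\Delta_f\phi+2\langle\nabla\phi,\nabla u\rangle\big)e^{-f}d\sigma$. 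The remaining task is to show these extra terms collapse to $\int_M|\nabla\phi|^2u^2e^{-f}d\sigma$; this follows from the self-adjointness of $\Delta_f$ on $L^2_f$, which allows integration by parts (with no boundary terms, since $\phi$ has compact support) to convert $\int_M\phi u^2\Delta_f\phi\,e^{-f}d\sigma$ into $-\int_M\langle\nabla\phi,\nabla(\phi u^2)\rangle e^{-f}d\sigma$. Careful bookkeeping of the cross terms then yields precisely $\int_M|\nabla\phi|^2u^2e^{-f}d\sigma$.

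For the second identity, I would specialize the first formula to $u=\langle X,\eta\rangle$ with $X$ a parallel field, replacing the test function $\phi$ by $\phi^2$ in the appropriate places, or equivalently integrating the pointwise identity against $\phi^2 e^{-f}$. Proposition \ref{proposition1} supplies $L_f u=ku$ and, more usefully, the identity $\Delta_f u=-|A|^2 u$ together with $|\nabla u|^2=|AX^\top|^2$ (the latter read off from the expression $\nabla u=-\sum a_{ji}\langle e_i,X\rangle e_j$ established in the proof of Proposition \ref{proposition1}). The strategy is to integrate $\phi^2\Delta_f u\,e^{-f}$ by parts: since $\Delta_f u=-|A|^2 u$, the left side produces $-\int_M\phi^2|A|^2\langle X,\eta\rangle e^{-f}d\sigma$, while integration by parts moves the derivatives onto $\phi^2$, giving $-\int_M\langle\nabla(\phi^2),\nabla u\rangle e^{-f}d\sigma=-2\int_M\phi\langle\nabla\phi,\nabla u\rangle e^{-f}d\sigma$, and then $\nabla u=-AX^\top$ (in the sense that $\langle\nabla u,Y\rangle=-\langle AX^\top,Y\rangle$ on tangent vectors) converts $\langle\nabla\phi,\nabla u\rangle$ into $-\langle\nabla\phi,AX^\top\rangle$. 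Matching the two sides produces the claimed equation \eqref{equation7}.

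The only point requiring genuine care is the identification $|\nabla u|^2=|AX^\top|^2$ and the corresponding sign in $\langle\nabla\phi,\nabla u\rangle=-\langle\nabla\phi,AX^\top\rangle$; both hinge on the formula for $\nabla u$ from Proposition \ref{proposition1} and on $A$ being symmetric, so that $\sum_{i,j}a_{ji}\langle e_i,X\rangle\langle\nabla\phi,e_j\rangle=\langle AX^\top,\nabla\phi\rangle$. I expect the main obstacle to be purely notational bookkeeping of the cross terms and signs rather than any conceptual difficulty, since the noncompact setting is handled entirely by the cutoff $\phi\in C_c^\infty(M)$ which guarantees vanishing boundary contributions in every integration by parts.
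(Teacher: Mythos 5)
Your proposal is correct and takes essentially the same route as the paper: the first identity by expanding $\Delta_f(\phi u)$ via the product rule and eliminating the cross terms through integration by parts with the compactly supported cutoff (the paper phrases this as $\int_M \di(\phi u^2 e^{-f}\nabla\phi)\,d\sigma=0$), and the second by combining $L_f\langle X,\eta\rangle=k\langle X,\eta\rangle$ --- equivalently your $\Delta_f u=-|A|^2u$ --- with $\nabla\langle X,\eta\rangle=-AX^{\top}$ from Proposition \ref{proposition1}. The only cosmetic difference is that you cancel the $k$-terms pointwise before integrating, whereas the paper integrates $\phi^2 L_f\langle X,\eta\rangle$ first and then cancels $k\int_M\phi^2\langle X,\eta\rangle e^{-f}d\sigma$ from both sides.
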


\begin{proof}
	Note that
	\begin{eqnarray*}
		I_f(\phi u,\phi u)&=&-\int_M(\phi u)L_f(\phi u)e^{-f}d\sigma\\
		&=&-\int_M\left[(\phi u)\Delta_f(\phi u)+\left(|A|^2+\overline{\Ric}_f(\eta,\eta)\right)\phi^2 u^2\right]e^{-f}d\sigma\\
		&=&-\int_M\left[\phi^2u\Delta_fu+\phi u^2\Delta_f\phi+2\phi u\langle\nabla\phi,\nabla u\rangle+\left(|A|^2+\overline{\Ric}_f(\eta,\eta)\right)\phi^2 u^2\right]e^{-f}d\sigma,
	\end{eqnarray*}
As $\phi$ has compact support, then
$$0=\int_M\di(\phi u^2e^{-f}\nabla\phi)d\sigma=\int_M\left(\phi u^2\Delta_f\phi+2u\phi\langle\nabla u,\nabla\phi\rangle+u^2|\nabla\phi|^2\right)e^{-f}d\sigma.$$
	Therefore, by using the last two expressions, we obtain
	\begin{eqnarray*}
		I_f(\phi u,\phi u)&=&-\int_M\left[\phi^2u\Delta_fu-|\nabla\phi|^2u^2+\left(|A|^2+\overline{\Ric}_f(\eta,\eta)\right)\phi^2 u^2\right]e^{-f}d\sigma\\
		&=&-\int_M\phi^2uL_fue^{-f}d\sigma+\int_M|\nabla\phi|^2u^2e^{-f}d\sigma.
	\end{eqnarray*}
	Moreover, supposing $\overline{\Ric}_f=kg$, $H_f$ is constant, and $X$ is a parallel field on $\overline M_f$, we obtain 
	\begin{eqnarray*}
		\int_M\phi^2k\langle X,\eta\rangle e^{-f}d\sigma&=&\int_M\phi^2 L_f\langle X,\eta\rangle e^{-f}d\sigma\\
		&=&-\int_M\langle \nabla\phi^2,\nabla\langle X,\eta\rangle\rangle e^{-f}d\sigma+\int_M\left(|A|^2+k\right)\phi^2\langle X,\eta\rangle e^{-f}d\sigma.
	\end{eqnarray*}
	Hence, 
	$$\int_M|A|^2\phi^2\langle X,\eta\rangle e^{-f}d\sigma=\int_M\langle \nabla\phi^2,\nabla\langle X,\eta\rangle\rangle e^{-f}d\sigma=-2\int_M \phi \langle\nabla\phi,AX^{\top}\rangle e^{-f}d\sigma.$$
\end{proof}

\begin{lemma} \label{lema5} 	
Let $\overline M_f^{n+1}$ be a shrinking gradient Ricci soliton satisfying $\overline{\Ric}_f=kg$ and $M^n$ be a constant weighted mean curvature hypersurface isometrically immersed  in  $\overline M_f^{n+1}.$ If $M$ is noncompact and finite weighted volume, there exists $\phi\in C^{\infty}_c(M)$ such that $I_f$ is negative defined in $\phi V$ and $\dim(\phi V)=\dim V$, where
	$$V={\rm \Span}\left\{1,\langle X,\eta\rangle: \ X \ \text{is a parallel field on }\overline M^{n+1}_f\right\}.$$  
Moreover, by assuming that $\int_M|A|^2e^{-f}d\sigma<\infty$, we have
$$\int_M|A|^2\langle X,\eta\rangle e^{-f}d\sigma=0.$$		
\end{lemma}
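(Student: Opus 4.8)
The plan is to regard Lemma \ref{lema5} as the noncompact analogue of Lemma \ref{lema2}: rather than testing $I_f$ directly against the finite-dimensional space $V$ (which is not permitted, since its elements need not have compact support), I would test against $\phi v$ for a suitable cutoff $\phi\in C_c^{\infty}(M)$ and then let the cutoff exhaust $M$. Since $M$ is complete, its intrinsic closed balls are compact by Hopf--Rinow, so every compactly supported Lipschitz function built from the distance $r(\cdot)=\dist_M(o,\cdot)$ belongs to (the closure of) $C_c^{\infty}(M)$; this is the class of test functions I would use.

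The computational heart is Proposition \ref{proposition3}, which gives $$I_f(\phi v,\phi v)=-\int_M\phi^2\,vL_fv\,e^{-f}d\sigma+\int_M|\nabla\phi|^2v^2e^{-f}d\sigma.$$ Writing a generic element of $V$ as $v=c_0+u$ with $u=\langle X,\eta\rangle$ and $X$ parallel, Proposition \ref{proposition1} gives $L_fu=ku$ and $L_f1=|A|^2+k$, so that $vL_fv=c_0^2|A|^2+c_0u|A|^2+kv^2$. Substituting, applying identity (\ref{equation7}) to rewrite $-c_0\int_M\phi^2|A|^2u\,e^{-f}d\sigma=2c_0\int_M\phi\langle\nabla\phi,AX^{\top}\rangle e^{-f}d\sigma$, and then using that $X$ is parallel (so $|X|$ is constant and $|AX^{\top}|\le|A|\,|X|$) together with Young's inequality (with parameter $c_0^2$ when $c_0\neq0$, the case $c_0=0$ being immediate) to cancel the term $-c_0^2\int_M\phi^2|A|^2e^{-f}d\sigma$, I obtain the key estimate $$I_f(\phi v,\phi v)\le -k\int_M\phi^2v^2e^{-f}d\sigma+(|X|^2+C_v^2)\int_M|\nabla\phi|^2e^{-f}d\sigma,$$ where $C_v=\sup_M|v|\le|c_0|+|X|<\infty$. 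The point of this manoeuvre is that the possibly nonintegrable quantity $\int_M|A|^2e^{-f}d\sigma$ has been eliminated, so no integrability hypothesis on $|A|$ is needed for the first assertion.

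Next I would produce the cutoffs. Take $\phi_R$ equal to $1$ on $B_R^M(o)$, decaying linearly to $0$ on $B_{2R}^M(o)\setminus B_R^M(o)$, and $0$ outside; then $|\nabla\phi_R|\le 1/R$ on the annulus and $$\int_M|\nabla\phi_R|^2e^{-f}d\sigma\le\frac{1}{R^2}\Vol_f(M)\longrightarrow0,$$ which is exactly where the finiteness of the weighted volume enters. Since $V$ is finite-dimensional and $e^{-f}>0$, the form $v\mapsto\int_Mv^2e^{-f}d\sigma$ is positive definite on $V$ and hence bounded below by some $\delta>0$ on the unit sphere of $V$; because every $v$ in that sphere is uniformly bounded and $\Vol_f(\{r>R\})\to0$, one has $\int_M\phi_R^2v^2e^{-f}d\sigma\ge\delta/2$ uniformly for $R$ large. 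Combined with the key estimate and $\int_M|\nabla\phi_R|^2e^{-f}d\sigma\to0$, this yields $I_f(\phi_R v,\phi_R v)<0$ for every nonzero $v\in V$ once $R$ is fixed large enough, i.e.\ $I_f$ is negative definite on $\phi_R V$. Negative definiteness forces $\phi_R v\neq0$ for $v\neq0$, so $v\mapsto\phi_R v$ is injective and $\dim(\phi_R V)=\dim V$, giving the first assertion with $\phi=\phi_R$.

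Finally, for the second assertion I would pass to the limit in identity (\ref{equation7}) with $\phi=\phi_R$. Assuming $\int_M|A|^2e^{-f}d\sigma<\infty$, the left side $\int_M\phi_R^2|A|^2\langle X,\eta\rangle e^{-f}d\sigma$ converges to $\int_M|A|^2\langle X,\eta\rangle e^{-f}d\sigma$ by dominated convergence (dominant $|A|^2|X|e^{-f}$), while Cauchy--Schwarz bounds the right side by $2|X|\big(\int_M|A|^2e^{-f}d\sigma\big)^{1/2}\big(\int_M|\nabla\phi_R|^2e^{-f}d\sigma\big)^{1/2}\to0$; hence $\int_M|A|^2\langle X,\eta\rangle e^{-f}d\sigma=0$. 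I expect the main obstacle to be not any single inequality but the bookkeeping needed to make the cutoff argument uniform over all of $V$ at once --- in particular the positive-definiteness and compactness argument on the unit sphere --- together with the observation that the algebraic cancellation via (\ref{equation7}) and Young's inequality is precisely what decouples the first assertion from any control on $\int_M|A|^2e^{-f}d\sigma$.
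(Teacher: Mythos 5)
Your proposal is correct and is essentially the paper's own argument: the same computation of $vL_fv$ from Proposition \ref{proposition1}, the same use of Proposition \ref{proposition3} with the identity (\ref{equation7}) and Young's inequality to cancel $-c_0^2\int_M\phi^2|A|^2e^{-f}d\sigma$ (the paper's inequality (\ref{eq6}), where your bound $u^2+|X^{\top}|^2\le c_0^2+2|c_0||X|+|X|^2$ appears via $\langle X,\eta\rangle^2+|X^{\top}|^2=|X|^2$), the same linear cutoffs with $|\nabla\phi_R|\le 1/R$ whose gradient term is killed by $\Vol_f(M)<\infty$, a compactness argument on the unit sphere of the finite-dimensional space $V$ to make the choice of $R$ uniform, and the same Cauchy--Schwarz plus dominated-convergence limit for $\int_M|A|^2\langle X,\eta\rangle e^{-f}d\sigma=0$. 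Your implementation choices are harmless, and in places slightly cleaner, variants of the paper's: intrinsic balls instead of the paper's extrinsic ones (so you never need to invoke properness via Proposition \ref{ip4} to get compact supports), a direct quantitative lower bound $\int_M\phi_R^2v^2e^{-f}d\sigma\ge\delta/2$ on the sphere instead of the paper's contradiction argument, injectivity of $v\mapsto\phi v$ deduced from negative definiteness instead of the paper's orthonormal-basis argument, and $\phi=\phi_R$ in (\ref{equation7}) instead of the paper's $\phi=\sqrt{\phi_j}$, which even sidesteps the non-smoothness of the square root at the boundary of the support.
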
	

\begin{proof}
Let $u=c_0+\langle X,\eta\rangle$, where $c_0$ is a real constant and $X$ is a parallel field on $\overline M^{n+1}_f$. As $H_f$ is constant, then by Proposition \ref{proposition1}, we have
	\begin{eqnarray}\label{equation15}
		L_f u&=&\Delta_f u+\left(|A|^2+k\right)u=L_f\langle X,\eta\rangle+\left(|A|^2+k\right)c_0\nonumber\\
		&=&k\langle X,\eta\rangle+\left(|A|^2+k\right)c_0.
	\end{eqnarray}
	It follows from Proposition \ref{proposition3} and equality (\ref{equation15}) that 
	\begin{eqnarray*}
		I_f(\phi u,\phi u)&=&-\int_{M}\phi^2uL_fue^{-f} d\sigma+\int_{M}|\nabla\phi|^2u^2 e^{-f}d\sigma\\
		&=&-\int_{M}\phi^2u\left(k\langle X,\eta\rangle+\left(|A|^2+k\right)c_0\right) e^{-f}d\sigma+\int_{M}|\nabla\phi|^2u^2 e^{-f}d\sigma\\
		&=&-k\int_{M}\phi^2u^2e^{-f}d\sigma-\int_{M}\phi^2|A|^2c_0^2e^{-f}d\sigma-\int_{M}\phi^2|A|^2c_0\langle X,\eta\rangle e^{-f}d\sigma\\
		&&+\int_{M}|\nabla\phi|^2u^2 e^{-f}d\sigma.
	\end{eqnarray*}	
	Now, by using once more the Proposition \ref{proposition3} and Cauchy-Schwarz's inequality, 
	\begin{eqnarray*}
		\left|\int_M\phi^2|A|^2c_0\langle X,\eta\rangle e^{-f}d\sigma\right|&=& 2\left|\int_M\phi \langle\nabla\phi, AX^{\top}\rangle c_0  \ e^{-f}d\sigma\right|\\
		&\leq& 2\int_M|\phi|\,| \nabla\phi|\,|A|\,|X^{\top}|\, |c_0|  \ e^{-f}d\sigma\\
		&\leq&\int_M\phi^2|A|^2 c_0^2\, e^{-f}d\sigma+\int_M|\nabla\phi|^2|X^{\top}|^2e^{-f}d\sigma.
	\end{eqnarray*}
	Therefore,
	\begin{equation}
	\label{eq6}
	I_f(\phi u,\phi u)\leq-k\int_M\phi^2 u^2e^{-f}d\sigma+\int_M|\nabla \phi|^2(u^2+|X^{\top}|^2)e^{-f}d\sigma.
	\end{equation}
	Let $r(x)$ be the extrinsic distance from $x\in M$ to a fixed point $o\in \overline M_f$. For $R>0$ sufficiently large, define the function $\phi_R:M\rightarrow\R$ such that
	\begin{equation}
	\label{equation5}
	\phi_R(x)=\left\{
	\begin{array}{lcl}
	1, & r(x)\leq R;\\
	\dfrac{2R-r(x)}{R}, & R\leq r(x)\leq2R;\\
	0,& r(x)\geq2R.
	\end{array}
	\right.
	\end{equation}
	Observe that $|\nabla\phi_R|\leq1/R$ and $\phi_R\in C^{\infty}_c(M)$ because $M$ is proper (see Proposition \ref{ip4}). Now, by substituting $\phi=\phi_R$ in the inequality (\ref{eq6}), we obtain 
	$$I_f(\phi_R u,\phi_R u)\leq-k\int_M\phi^2_R u^2e^{-f}d\sigma+\dfrac{|X|^2+c_0^2+2|c_0|\,|X|}{R^2}\int_{M\cap(B_{2R}\setminus B_R)}e^{-f}d\sigma,$$
	(recall that $|X|$ is constant because $X$ is parallel). Since $\Vol_f(M)<\infty$ and $|X|$ is constant, then for each $u\in V$ there exists $R_u$ sufficiently large such that 
	$$I_f(\phi_{R_u}u,\phi_{R_u}u)<0.$$ 
	
	Let's find a function $\phi\in C^{\infty}_c(M)$ that is not dependent of the function $u$. In fact, we consider the subset $$S=\left\{u\in V:  \ \int_Mu^2e^{-f}d\sigma=1\right\}.$$
	Note that $V\subset L_f^2(M)$ is a subspace of finite dimension smaller than or equal to $\dim \mathcal P_{\overline M_f}+1$. Hence, $S$ is a compact set in $L_f^2(M)$. Thus, there exists a positive real number $R_0$ such that any function $u\in S$ vanishes identically on $M\cap B_{R_0}^{\overline M_f}(o)$. Otherwise, we could get a sequence ${R}_j\rightarrow\infty$ of positive numbers so that for each $j$ exists $u_j\in S$ with $u_j\equiv0$ on $M\cap B_{R_0}^{\overline M_f}(o)$. Hence, we would have $$u=\lim_{j\rightarrow\infty}u_j \ \in \ S$$ and \ $u\equiv0$ on $M.$ However, this is not possible because if $u\in S$, then $$\int_M u^2e^{-f}d\sigma=1.$$
	Therefore, to $R$ sufficiently large and $R\geq R_0$, and for any function $u\in S$, we have
	$$I_f(\phi_R u,\phi_R u)\leq-k\int_M\phi^2_R u^2e^{-f}d\sigma+\dfrac{|X|^2+c_0^2+2|c_0||X|}{R^2}\int_{M\cap(B_{2R}\setminus B_R)}e^{-f}d\sigma<0$$
	because 
	$$M(R)=\int_M\phi_R^2u^2e^{-f}d\sigma >0$$
	is an increasing function on $R$, and as $\Vol_f(M)<\infty$, 
	$$\lim_{R\rightarrow\infty}\dfrac{|X|^2+c_0^2+2|c_0||X|}{R^2}\int_{M\cap(B_{2R}\setminus B_R)}e^{-f}d\sigma=0.$$
	Now, we can find $\phi=\phi_R$ independent of $u$ such that $I_f(\phi u,\phi u)<0$ for all $u\in S$. If $u\in V$, then $\dfrac{1}{|u|_{L^{2}_f}}u\in S$, and thus, for $u\not\equiv0$,
	$$I_f(\phi u,\phi u)=|u|^{2}_{L^{2}_f}I_f\left(\phi \dfrac{u}{|u|_{L^{2}_f}},\phi \dfrac{u}{|u|_{L^{2}_f}}\right)<0.$$
	
	Now, we will show that $\dim V=\dim(\phi V).$ In fact, let $\{u_1,u_2,\ldots,u_s\}$ be a orthonormal basis to the vector subspace $V\subset L^{2}_f(M)$. For the function $\phi$ built here, we have that $u_i\not\equiv0$ on $M\cap B_R^{\overline M_f}(o).$ Therefore, $\{\phi u_1,\phi u_2,\ldots,\phi u_s\}$ is linearly independent, so we can conclude that $\dim(\phi V)=\dim V.$
	
	Finally, it follows from Proposition \ref{proposition3},	equation (\ref{equation7}) with $\phi=\sqrt\phi_j$, and Cauchy–Schwarz inequality that 
	\begin{eqnarray*}\label{equation9}\left|2\int_M\phi_j|A|^2\langle X,\eta\rangle e^{-f}d\sigma\right|&=&\left|-2\int_M \langle\nabla\phi_j,AX^{\top}\rangle e^{-f}d\sigma\right|\\
		&\leq&\left(\int_M|\nabla \phi_j|^2e^{-f}d\sigma\right)^{\frac{1}{2}}\left(\int_M |AX^{\top}|^2e^{-f}d\sigma\right)^{\frac{1}{2}}.
	\end{eqnarray*}
	Now, putting $R=j$ in the function defined in (\ref{equation5}) and reviewing that $|X|$ is constant, we get
	\begin{equation}\label{equation6}\left|2\int_M\phi_j|A|^2\langle X,\eta\rangle e^{-f}d\sigma\right|
		\leq\frac{|X|}{j}\left(\int_{M\cap(B_{2j}\setminus B_j)}e^{-f}d\sigma\right)^{\frac{1}{2}}\left(\int_M |A|^2e^{-f}d\sigma\right)^{\frac{1}{2}}.
	\end{equation}
	By hypothesis $M$ has finite weighted volume and $\int_M|A|^2e^{-f}d\sigma<\infty,$	hence the right-hand side of (\ref{equation6}) tends to zero as $j\rightarrow\infty$, and we can conclude that
	\begin{equation}\label{equation8}
	\lim_{j\rightarrow\infty}\int_M\phi_j|A|^2\langle X,\eta\rangle e^{-f}d\sigma=0.
	\end{equation}	
	Since $$\lim_{j\rightarrow\infty}(\phi_j|A|^2\langle X,\eta\rangle)(x)=(|A|^2\langle X,\eta\rangle)(x)$$ for each $x\in M$, $$|\phi_j|A|^2\langle X,\eta\rangle|\leq|A|^2|\langle X,\eta\rangle|$$
	and
	$$0\leq\int_M|A|^2|\langle X,\eta\rangle| e^{-f}d\sigma\leq|X|^2\int_M|A|^2e^{-f}d\sigma<\infty,$$
	then using the dominated convergence theorem and expression (\ref{equation8}), we get
	$$\int_M|A|^2\langle X,\eta\rangle e^{-f}d\sigma=0.$$
\end{proof}

\begin{proof}[Proof of Theorem \ref{it2}.]
	Let $V\equiv \Span\{1,\langle X,\eta\rangle: \ X\in\mathcal P_{\overline M_f}\}$. Since $1\not\in \{\langle X,\eta\rangle: \ {X\in\mathcal P_{\overline M_f}}\}$ by hypothesis, 
	$$\dim V=1+\dim \{\langle X,\eta\rangle: \ {X\in\mathcal P_{\overline M_f}}\}.$$
	By Lemmas \ref{lema2} and \ref{lema5}, there exists a function $\phi\in C_c^{\infty}(M)$ such that $\dim\phi V=\dim V$ and $I_f$ is negative defined in $\phi V.$ In compact hypersurfaces, choose $\phi\equiv1$. Recall that the $\Ind_f(M)$ is the maximal dimension of a subspace of $\mathcal F\cap C_c^{\infty}(M)$ which $I_f$ is negative defined, with
	$\mathcal F=C_c^{\infty}(M)$ if $M$ is a $f$-minimal and $$\mathcal F=\left\{u\in C^{\infty}_c(M); \ \int_Mue^{-f}d\sigma=0\right\}$$
	if $M$ has constant weighted mean curvature. 
	
	Now, observe that $$\dim\{\langle X,\eta\rangle: \ {X\in\mathcal P_{\overline M_f}}\}\leq\dim (\mathcal F\cap \phi V).$$
	In fact, $|X|$ is constant because $X$ is a parallel field, and since the weighted volume of $M$ is finite, we obtain that $\int_M\phi\langle X,\eta\rangle e^{-f}d\sigma\leq|X|\int_M\phi e^{-f}d\sigma<\infty$. Hence, there exists a real number $c_0$ satisfying 
	$$\int_M\phi(c_0+\langle X,\eta\rangle)e^{-f}d\sigma=0.$$
	Therefore, $\phi(c_0+\langle X,\eta\rangle)\in \mathcal F\cap \phi V$ whenever $\langle X,\eta\rangle\in\{\langle X,\eta\rangle: \ {X\in\mathcal P_{\overline M_f}}\}$. Since $I_f$ is negative defined in $\mathcal F\cap \phi V$, we conclude that 
	$$\dim\{\langle X,\eta\rangle: \ {X\in\mathcal P_{\overline M_f}}\}\leq\dim (\mathcal F\cap \phi V)\leq\Ind_f(M).$$
	
	Consider the linear transformation $T:\mathcal P_{\overline M_f}\rightarrow C^{\infty}(M)$ defined by $T(X)=\langle X,\eta\rangle$. Now applying the kernel and image theorem, that turns
	\begin{eqnarray}\label{b}
	\dim \mathcal P_{\overline M_f}&=&\dim\{X\in\mathcal P_{\overline M_f}: \ \langle X,\eta\rangle\equiv0\}+\dim\{\langle X,\eta\rangle; \ {X\in\mathcal P_{\overline M_f}}\}\nonumber\\
	&\leq&\dim\{X\in\mathcal P_{\overline M_f}: \ \langle X,\eta\rangle\equiv0\}+\Ind_f(M).
	\end{eqnarray}
	Otherwise, if $1\in\{\langle X,\eta\rangle: \ {X\in\mathcal P_{\overline M_f}}\},$  $L_f1=k$ by Proposition \ref{proposition1}, and by definition $L_f1=|A|^2+k$. Therefore, $|A|^2\equiv0.$
\end{proof}

\begin{proof}[Proof of Corollary \ref{ic1}.]
	Note that 
	\begin{equation}
	\label{a} \dim\{X\in\mathcal P_{\overline M_f}: \ \langle X,\eta\rangle\equiv0\}\leq \dim\mathcal P_{\overline M_f}-1
	\end{equation}
	as long as we assume that exists a field $X_0\in \mathcal P_{\overline M_f}$ such that $\langle X_0,\eta\rangle\not\equiv0$.
	It follows from Theorem \ref{it2} and from the inequality (\ref{a}) that 
	$$\dim\mathcal P_{\overline M_f}-\Ind_f M\leq \ \dim\{X\in\mathcal P_{\overline M_f}: \ \langle X,\eta\rangle\equiv0\} \ \leq \dim\mathcal P_{\overline M_f}-1.$$	
	Therefore, $$\Ind_f M\geq1.$$
	Now supposing $\Ind_f M=1$, we have
	$$\dim\mathcal P_{\overline M_f}-1\leq \dim\{X\in \mathcal P_{\overline M_f}: \ \langle X,\eta\rangle\equiv0\}\leq \dim\mathcal P_{\overline M_f}-1.$$
	Therefore,
	$$\dim\{X\in \mathcal P_{\overline M_f}: \ \langle X,\eta\rangle\equiv0\}=\dim\mathcal P_{\overline M_f}-1.$$
\end{proof}

Now, let's obtain a necessary condition for a constant weighted mean curvature hypersurface $M^n$ with finite weighted volume satisfies the following equality:  
	 	\begin{equation*}
	 	\Ind_f(M)=\dim \mathcal P_{\overline M_f}-\dim\{X\in P_{\overline M_f}: \ \langle X,\eta\rangle\equiv0\}. 
	 	\end{equation*}
For this, we will prove some lemmas, they are adaption of known results.

The weighted Laplacian operator $\Delta_f$ is associated to $e^{-f}d\sigma$ as well as $\Delta$ is associated to $d\sigma.$ This is viewed in the following Green's theorem version for wighted Laplacian:

\begin{lemma}\label{Green} Let $\Omega\subset M$ be a compact set and let $u,v\in C^{\infty}(\Omega)$, then
	\begin{equation}\label{G1}
	\int_{\Omega}(u\Delta_fv)e^{-f}\,d\sigma+\int_{\Omega}\langle\nabla u,\nabla v\rangle e^{-f}\,d\sigma=\int_{\partial \Omega}u\nu(v) e^{-f}\,d\partial \Omega, \ \ \ \text{and}
	\end{equation}
	\begin{equation}\label{G2}
	\int_{\Omega}(u\Delta_fv-v\Delta_fu)e^{-f}\,d\sigma=\int_{\partial \Omega}(u\nu(v)-v\nu(u)) e^{-f}\,d\partial \Omega,
	\end{equation}
	where $\nu$ denotes the exterior unit normal to $\Omega$ along  $\partial\Omega$.
\end{lemma}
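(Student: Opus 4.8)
The plan is to prove a weighted version of Green's formulas by reducing them to the divergence theorem applied to cleverly chosen vector fields, so that the weight $e^{-f}$ is absorbed into the divergence. The key observation is that for a smooth function $w$ on $\Omega$, the weighted divergence $\di_f(w\nabla v):=\di(w\nabla v)-\langle\nabla f,w\nabla v\rangle$ satisfies $\di_f(w\nabla v)e^{-f}=\di(we^{-f}\nabla v)$, because $\di(we^{-f}\nabla v)=e^{-f}\di(w\nabla v)-we^{-f}\langle\nabla f,\nabla v\rangle$. Thus the weight and the drift term $-\langle\nabla f,\nabla\cdot\rangle$ hidden in $\Delta_f$ combine into an honest divergence, and the ordinary divergence theorem becomes available.

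To prove \eqref{G1}, I would set $w=u$ and expand $\di(ue^{-f}\nabla v)$ as $e^{-f}\langle\nabla u,\nabla v\rangle+ue^{-f}\Delta v-ue^{-f}\langle\nabla f,\nabla v\rangle$, which equals $e^{-f}\langle\nabla u,\nabla v\rangle+ue^{-f}\Delta_f v$ upon recalling the definition $\Delta_f v=\Delta v-\langle\nabla f,\nabla v\rangle$ from the excerpt. Integrating this identity over $\Omega$ and applying the ordinary divergence theorem to the vector field $ue^{-f}\nabla v$ yields
$$\int_\Omega\left(u\Delta_f v+\langle\nabla u,\nabla v\rangle\right)e^{-f}\,d\sigma=\int_{\partial\Omega}ue^{-f}\langle\nabla v,\nu\rangle\,d\partial\Omega=\int_{\partial\Omega}u\,\nu(v)e^{-f}\,d\partial\Omega,$$
which is exactly \eqref{G1}. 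For \eqref{G2}, I would write down \eqref{G1} a second time with the roles of $u$ and $v$ interchanged and subtract; the symmetric gradient terms $\langle\nabla u,\nabla v\rangle$ cancel, leaving the desired antisymmetric formula.

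I do not expect any genuine obstacle here, since this is a standard self-adjointness computation for the weighted Laplacian; the only points requiring mild care are the regularity and compactness hypotheses ensuring the divergence theorem applies (which hold because $\Omega$ is compact and $u,v\in C^\infty(\Omega)$), and bookkeeping the sign of the boundary term so that $\nu(v)=\langle\nabla v,\nu\rangle$ is the derivative in the outward normal direction. The essential mechanism is simply that $e^{-f}\Delta_f$ is the divergence with respect to the weighted measure, so Green's identities transfer verbatim from the unweighted setting with every $d\sigma$ and $d\partial\Omega$ replaced by their $e^{-f}$-weighted counterparts.
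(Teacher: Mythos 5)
Your proposal is correct and follows essentially the same route as the paper: expand $\di(ue^{-f}\nabla v)=e^{-f}\bigl(u\Delta_f v+\langle\nabla u,\nabla v\rangle\bigr)$ by the product rule, apply the ordinary divergence theorem to the field $ue^{-f}\nabla v$ to get \eqref{G1}, and obtain \eqref{G2} by antisymmetrizing in $u$ and $v$. No gaps; the computation and the choice of vector field coincide with the paper's proof.
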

\begin{proof}
	In fact,
	\begin{eqnarray*}
		\di(e^{-f}u\nabla v)&=& e^{-f}u\,\di(\nabla v)+\langle\nabla(e^{-f}u),\nabla v\rangle\\
		&=& e^{-f}u\Delta v-e^{-f}u\langle\nabla f,\nabla v\rangle+e^{-f}\langle\nabla u,\nabla v\rangle\\
		&=& e^{-f}u\Delta_f v+e^{-f}\langle\nabla u,\nabla v\rangle.
	\end{eqnarray*}
	Integrating both sides from above inequality and applying the divergent theorem to the field $X=e^{-f}u\nabla v$, it follows the equality (\ref{G1}). The equality  (\ref{G2}) is obtained by integrating  the difference $u\Delta_f v-v\Delta_f u$ and by applying the equality (\ref{G1}). 
\end{proof}

\begin{lemma}[\cite{ColdingMinicozzi2012}, Corollary 3.10]\label{lemma2}
	Suppose that $M$ is a complete hypersurface without boundary. If $u,$ $v$ are $C^2$ functions with
	\begin{equation}\label{equation1}
	\int_M\left(|u\nabla v|+|\nabla u|\,|\nabla v|+|u\Delta_f v|\right)e^{-f}d\sigma<\infty,
	\end{equation}  
	then 
	\begin{equation}
	\int_M u(\Delta_f v)e^{-f}d\sigma=-\int_M\langle\nabla v,\nabla u\rangle e^{-f}d\sigma.
	\end{equation}
	\end{lemma}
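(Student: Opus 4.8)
The plan is to reduce the noncompact identity to the compact weighted Green's identity of Lemma~\ref{Green} by inserting a cutoff function, and then to let the cutoff exhaust $M$, using the integrability hypothesis (\ref{equation1}) to kill every error term. Since $M$ is complete and without boundary, Hopf--Rinow guarantees that the intrinsic geodesic balls $B_R=B_R^M(o)$ about a fixed point $o\in M$ have compact closure and exhaust $M$. First I would fix such an $o$ and, writing $\rho(x)=\dist_M(x,o)$, build a Lipschitz cutoff $\phi_R:M\rightarrow\R$ with $\phi_R\equiv1$ on $B_R$, $\phi_R\equiv0$ off $B_{2R}$, $0\le\phi_R\le1$, and $|\nabla\phi_R|\le 1/R$ almost everywhere (exactly as in (\ref{equation5}), but with the intrinsic distance in place of the extrinsic one). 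After mollifying, if one insists on $C^\infty$ regularity, $\phi_R u$ is then a compactly supported function on $M$.

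Next I would apply the first weighted Green's identity (\ref{G1}) on the compact set $\Omega=\overline{B_{2R}}$ to the pair $\phi_R u$ and $v$. Because $\phi_R u$ vanishes in a neighbourhood of $\partial\Omega$, the boundary integral drops out, and expanding $\nabla(\phi_R u)=\phi_R\nabla u+u\nabla\phi_R$ gives
\begin{equation*}
\int_M \phi_R\,u\,(\Delta_f v)\,e^{-f}\,d\sigma
=-\int_M \phi_R\langle\nabla u,\nabla v\rangle\,e^{-f}\,d\sigma
-\int_M u\,\langle\nabla\phi_R,\nabla v\rangle\,e^{-f}\,d\sigma.
\end{equation*}
The whole argument then consists of passing to the limit $R\rightarrow\infty$ term by term. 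For the two principal integrals I would invoke dominated convergence: the integrands $\phi_R u(\Delta_f v)$ and $\phi_R\langle\nabla u,\nabla v\rangle$ converge pointwise to $u\,\Delta_f v$ and $\langle\nabla u,\nabla v\rangle$ (since $\phi_R\to1$ pointwise and $0\le\phi_R\le1$), and they are dominated in absolute value by $|u\Delta_f v|$ and $|\nabla u|\,|\nabla v|$ respectively, both integrable against $e^{-f}d\sigma$ by hypothesis (\ref{equation1}). Hence these integrals tend to $\int_M u\,\Delta_f v\,e^{-f}d\sigma$ and $\int_M\langle\nabla u,\nabla v\rangle e^{-f}d\sigma$.

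The crux---and the only place the first summand $|u\nabla v|$ of (\ref{equation1}) is used---is the error term $\int_M u\langle\nabla\phi_R,\nabla v\rangle e^{-f}d\sigma$. Since $\nabla\phi_R$ is supported in the annulus $B_{2R}\setminus B_R$ with $|\nabla\phi_R|\le 1/R$, I would estimate
\begin{equation*}
\left|\int_M u\,\langle\nabla\phi_R,\nabla v\rangle\,e^{-f}\,d\sigma\right|
\le\frac{1}{R}\int_{B_{2R}\setminus B_R}|u|\,|\nabla v|\,e^{-f}\,d\sigma
\le\frac{1}{R}\int_M |u|\,|\nabla v|\,e^{-f}\,d\sigma,
\end{equation*}
and the finiteness of $\int_M|u\nabla v|e^{-f}d\sigma$ forces the right-hand side to $0$ as $R\rightarrow\infty$. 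Combining the three limits yields the claimed identity. The main obstacle is precisely this boundary/error term: on a noncompact $M$ there is a priori no control on how $\nabla v$ behaves at infinity, and it is the integrability assumption (\ref{equation1}) that simultaneously legitimizes the dominated-convergence passages and guarantees that the annular tail decays; completeness enters only to furnish the compact exhaustion on which Lemma~\ref{Green} may be applied.
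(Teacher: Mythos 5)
Your proof is correct and is essentially the proof of the result the paper cites: the paper states this lemma without proof, attributing it to Colding--Minicozzi (Corollary 3.10), and their argument is exactly your scheme of cutting off, applying the weighted Green identity with vanishing boundary term, killing the error term $\int_M u\langle\nabla\phi_R,\nabla v\rangle e^{-f}d\sigma$ via the integrability of $|u\nabla v|$, and passing to the limit by dominated convergence (they use unit-slope cutoffs on annuli $B_{j+1}\setminus B_j$, so the error is controlled by the integrable tail of $|u||\nabla v|$ rather than by your factor $1/R$; both work). One small polish: instead of invoking Lemma~\ref{Green} on $\Omega=\overline{B_{2R}}$, whose geodesic-sphere boundary need not be smooth enough for the divergence theorem, it is cleaner to note that $\di\bigl(\phi_R u\, e^{-f}\nabla v\bigr)$ is compactly supported and integrate it over all of $M$, which yields your identity with no boundary term and no regularity issue.
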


Let $W^{1,2}(e^{-f}d\sigma)$ be the weighted Sobolev space, which is the set of the functions $u$ on $M$ satisfying  $$\int_M(u^2+|\nabla u|^2)e^{-f}d\sigma<\infty$$
with the norm
$$\|u\|_{W^{1,2}_f}:=\left(\int_M(u^2+|\nabla u|^2)e^{-f}d\sigma\right)^{\frac{1}{2}}.$$
Fixed the notation $W^{1,2}_f=W^{1,2}_f(e^{-f}d\sigma)$ and $L^{2}_f=L^{2}_f(e^{-f}d\sigma)$, we get

\begin{lemma} \label{l2} Let $M^n$ be a constant weighted mean curvature hypersurface isometrically immersed in a gradient Ricci soliton $\overline M_f$ that satisfies $\overline{\Ric}_f=kg$.	Suppose that $h$ is a $C^2$ function with $L_fh=-\mu h$ for  $\mu\in\R.$
	\begin{enumerate}
	\item[(i)] If $h\in W^{1,2}_f$, then $|A|h\in L^2_f$ and 
	\begin{equation}\label{equation0}
	\int_M|A|^2h^2e^{-f}d\sigma\leq\int_M\left((1-k-\mu)h^2+2|\nabla h|^2\right)e^{-f}d\sigma.
	\end{equation}
	\item[(ii)] If $h>0$ and $\phi\in W^{1,2}_f,$ then
	\begin{equation}\label{equation3}
	\int_M\phi^2\left(2|A|^2+|\nabla\log h|^2\right)e^{-f}d\sigma\leq\int_M\left[4|\nabla\phi|^2-2(\mu+k)\phi^2\right]e^{-f}d\sigma.
	\end{equation}
	\end{enumerate}
\end{lemma}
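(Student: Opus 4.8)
The plan is to turn the eigenvalue equation $L_fh=-\mu h$ into a pointwise identity and to integrate it against the square of an exhausting cutoff; the whole difficulty is that $|A|h$ and $\phi|\nabla\log h|$ are not assumed square-integrable, so every integration by parts must first be localized and the global integrability recovered only in the limit. Throughout I use a family $\phi_R\in C^{\infty}_c(M)$ with $\phi_R\equiv1$ on the intrinsic ball $B^M_R$, $\phi_R\equiv0$ outside $B^M_{2R}$, and $|\nabla\phi_R|\leq 1/R$; such $\phi_R$ are compactly supported because $M$ is complete.

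For (i), since $L_f=\Delta_f+|A|^2+k$, the hypothesis reads $\Delta_f h=-(|A|^2+k+\mu)h$, which gives the pointwise identity $|A|^2h^2=-h\Delta_f h-(k+\mu)h^2$. First I would multiply by $\phi^2$ with $\phi\in C^{\infty}_c(M)$, integrate against $e^{-f}d\sigma$, and integrate the $\Delta_f$-term by parts (legitimate, since the field $e^{-f}\phi^2h\nabla h$ has compact support), obtaining
$$\int_M\phi^2|A|^2h^2e^{-f}d\sigma=\int_M\phi^2|\nabla h|^2e^{-f}d\sigma+2\int_M\phi h\langle\nabla\phi,\nabla h\rangle e^{-f}d\sigma-(k+\mu)\int_M\phi^2h^2e^{-f}d\sigma.$$
Bounding the cross term by Young's inequality $2\phi h\langle\nabla\phi,\nabla h\rangle\leq\phi^2|\nabla h|^2+h^2|\nabla\phi|^2$ yields
$$\int_M\phi^2|A|^2h^2e^{-f}d\sigma\leq 2\int_M\phi^2|\nabla h|^2e^{-f}d\sigma+\int_M h^2|\nabla\phi|^2e^{-f}d\sigma-(k+\mu)\int_M\phi^2h^2e^{-f}d\sigma.$$
Now I would put $\phi=\phi_R$ and let $R\to\infty$: the term $\int_M h^2|\nabla\phi_R|^2e^{-f}d\sigma\leq R^{-2}\int_{B^M_{2R}\setminus B^M_R}h^2e^{-f}d\sigma$ tends to $0$ because $h\in L^2_f$, while the remaining integrals converge to their global values since $h\in W^{1,2}_f$; monotone convergence on the left then shows $|A|h\in L^2_f$ and gives $\int_M|A|^2h^2e^{-f}d\sigma\leq 2\int_M|\nabla h|^2e^{-f}d\sigma-(k+\mu)\int_M h^2e^{-f}d\sigma$, which is even sharper than (\ref{equation0}) because $-(k+\mu)\leq 1-k-\mu$.

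For (ii), I would pass to $w=\log h$, which is well defined and of class $C^2$ since $h>0$. A direct computation gives $\Delta_f w=(\Delta_f h)/h-|\nabla w|^2=-(|A|^2+k+\mu)-|\nabla w|^2$, i.e. the pointwise identity $|A|^2+|\nabla\log h|^2=-\Delta_f w-(k+\mu)$. Multiplying by $2\phi^2$, integrating, and moving $\Delta_f$ off $w$ by parts gives
$$\int_M 2\phi^2\bigl(|A|^2+|\nabla w|^2\bigr)e^{-f}d\sigma=4\int_M\phi\langle\nabla\phi,\nabla w\rangle e^{-f}d\sigma-2(k+\mu)\int_M\phi^2e^{-f}d\sigma.$$
Absorbing the first term on the right through $4\phi\langle\nabla\phi,\nabla w\rangle\leq\phi^2|\nabla w|^2+4|\nabla\phi|^2$ and cancelling one copy of $\int_M\phi^2|\nabla w|^2e^{-f}d\sigma$ from each side produces exactly (\ref{equation3}). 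For $\phi\in C^{\infty}_c(M)$ the integration by parts is immediate, since $w$ is $C^2$ and hence $|A|$ and $|\nabla w|$ are bounded on the support of $\phi$; for a general $\phi\in W^{1,2}_f$ I would apply the compactly supported inequality to $\phi\phi_R$ and let $R\to\infty$, using that $\int_M\phi^2|\nabla\phi_R|^2e^{-f}d\sigma\to0$ (because $\phi\in L^2_f$) and monotone convergence on the left.

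The main obstacle is the lack of any a priori integrability of $|A|h$ and $\phi|\nabla\log h|$: the argument rests on first localizing every integration by parts with the cutoffs $\phi_R$, then using Young's inequality to absorb the gradient cross terms into the good terms, and only afterwards recovering the global estimates in the limit $R\to\infty$ via monotone convergence together with the $W^{1,2}_f$ (resp.\ $L^2_f$) hypotheses. The most delicate point is the density step in (ii): one must check that the error $\int_M\phi^2|\nabla\phi_R|^2e^{-f}d\sigma$ is genuinely negligible and that no boundary contribution survives the passage to the limit, after which the inequality for $\phi\in C^{\infty}_c(M)$ upgrades to all $\phi\in W^{1,2}_f$.
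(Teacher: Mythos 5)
Your proof is correct and follows essentially the same route as the paper's: both convert $L_fh=-\mu h$ into the pointwise identities $\Delta_f h=-(\mu+|A|^2+k)h$ and $\Delta_f\log h=-(\mu+|A|^2+k)-|\nabla\log h|^2$, test against compactly supported cutoffs (respectively $\phi\phi_R$ in part (ii), exactly as the paper's $\psi=\psi_j\phi$), absorb the gradient cross terms by Young's inequality, and conclude by monotone convergence. The only deviation is cosmetic: the paper's cutoffs satisfy $|\nabla\phi_j|\leq1$, which is why its version of (i) carries the extra $h^2$ term with coefficient $1$, whereas your $1/R$-cutoffs make that error vanish and yield the slightly sharper coefficient $-(k+\mu)$, which implies the stated inequality since $-(k+\mu)\leq 1-k-\mu$.
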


\begin{proof}
	Let $\phi$ be a smooth function with compact support. Note that
	$$\Delta_f h^2=2|\nabla h|^2+2h\Delta_f h$$ and 
	\begin{equation}\label{equation19}
	\Delta_f h=\left(L_f-|A|^2-k\right)h=-\left(\mu+|A|^2+k\right)h.
	\end{equation}
	 By Lemma \ref{Green} and equality (\ref{equation19}),
	\begin{eqnarray}\label{equation2}
	\int_M\langle\nabla\phi^2,\nabla h^2\rangle e^{-f}d\sigma&=&-\int_M\phi^2\Delta_f h^2 e^{-f}d\sigma\nonumber\\
	&=&-2\int_M\phi^2\left[|\nabla h|^2-\left(\mu+|A|^2+k\right)h^2\right]e^{-f}d\sigma.
	\end{eqnarray} 
	Assume now that $\phi\leq1$ and $|\nabla\phi|\leq1$. Rearranging the terms in (\ref{equation2}) and using the inequality
	$$0\leq|\phi\nabla h-h\nabla\phi|^2=\phi^2|\nabla h|^2+h^2|\nabla\phi|^2-2\phi h\langle\nabla\phi,\nabla h\rangle\leq |\nabla h|^2+h^2-2\phi h\langle\nabla\phi,\nabla h\rangle,$$
	we have
	\begin{eqnarray}\label{equation16}
	\int_M\phi^2\left(2k+2\mu+2|A|^2\right)h^2e^{-f}d\sigma&=&4 \int_M\phi h\langle\nabla\phi,\nabla h\rangle e^{-f}d\sigma+2\int_M\phi^2|\nabla h|^2 e^{-f}d\sigma\nonumber\\
	&\leq&2\int_M h^2e^{-f}d\sigma+4\int_M|\nabla h|^2e^{-f}d\sigma.
	\end{eqnarray}
	Finally, consider the intrinsic ball  $B_j=B_j(p)$ in $M$ of radius $j$  and center at a fixed point $p\in M$. By applying (\ref{equation16}) with $\phi=\phi_j$, where $\phi_j$ is one on $B_j$ and cuts off linearly to zero from $\partial B_j$ to $\partial B_{j+1},$ letting $j\rightarrow\infty,$ and using the monotone convergence theorem we obtain 
	\begin{equation*}
	\int_M|A|^2h^2e^{-f}d\sigma\leq\int_M\left((1-k-\mu)h^2+2|\nabla h|^2\right)e^{-f}d\sigma.
	\end{equation*}
	Now, we will prove the inequality (\ref{equation3}). In fact,   $\log h$ is well-defined and 
	\begin{eqnarray}\label{equation17}
	\Delta_f\log h&=&\frac{1}{h}\Delta_fh-|\nabla\log h|^2=\frac{1}{h}L_fh-|A|^2-k-|\nabla\log h|^2\nonumber\\
	&=&-\mu-|A|^2-k-|\nabla\log h|^2.
	\end{eqnarray}
	Let $\psi$ be a function with compact support. Thus, it follows from Lemma \ref{Green} and expression (\ref{equation17}) that 
		\begin{eqnarray}\label{equation20}
		\int_M\langle\nabla\psi^2,\nabla\log h\rangle e^{-f}d\sigma&=&-\int_M\psi^2(\Delta_f\log h)e^{-f}d\sigma\nonumber\\
		&=&\int_M\psi^2\left(\mu+|A|^2+k+|\nabla\log h|^2\right)e^{-f}d\sigma.
		\end{eqnarray}
	Combining (\ref{equation20}) with the following inequality
	\begin{equation*}
	\langle\nabla\psi^2,\nabla\log h\rangle\leq2|\nabla\psi|^2+\frac{1}{2}\psi^2|\nabla\log h|^2,
	\end{equation*}
	gives us this
	\begin{equation}\label{equation4}
	\int_M\psi^2\left(2|A|^2+|\nabla\log h|^2\right)e^{-f}d\sigma\leq\int_M\left(4|\nabla\psi|^2-2\mu\psi^2-2k\psi^2\right)e^{-f}d\sigma.
	\end{equation}
	Let $\psi_j$ be one on $B_j$ and cut off linearly to zero from $\partial B_j$ to $\partial B_{j+1}$. Since $\phi\in W^{1,2}$, applying (\ref{equation4}) with $\psi=\psi_j\phi,$ and letting $j\rightarrow\infty,$ using the monotone convergence theorem, we get
	\begin{equation*}
	\int_M\phi^2\left(2|A|^2+|\nabla\log h|^2\right)e^{-f}d\sigma\leq\int_M\left[4|\nabla\phi|^2-2(\mu+k)\phi^2\right]e^{-f}d\sigma.
	\end{equation*}
\end{proof}

\begin{remark}
	Lemma \ref{l2} was obtain by Colding and Minicozzi II (see \cite{ColdingMinicozzi2012}, Lemma 9.15) to a complete noncompact hypersurface $\Sigma\subset\R^{n+1}$ without boundary that satisfies $$H=\frac{\langle x,\eta\rangle }{2},$$
	where $x$ is position vector.
\end{remark}  
Through use the Lemmas \ref{lemma2} and \ref{l2}, and ideas as the proof of Lemma 9.25 from \cite{ColdingMinicozzi2012} we have

\begin{lemma}\label{proposition4}
	Let $\mu_1(M)$ be the bottom $L^2_f$ spectrum of $L_f$. If $\mu_1(M)\neq-\infty$, then there exists a positive $C^2$ function $u$ on $M$ with $L_f u=-\mu_1(M)u.$ Moreover, if $w\in W^{1,2}_f$ and $L_f w=-\mu_1(M) w,$ then $w=Cu$ for some $C\in\R.$
\end{lemma}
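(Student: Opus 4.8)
The plan is to reproduce, in the weighted setting, the classical construction of the ground state of a Schr\"odinger-type operator, adapting the argument of Colding and Minicozzi (\cite{ColdingMinicozzi2012}, Lemma 9.25) to $L_f=\Delta_f+|A|^2+k$. The starting point is the variational description of the bottom of the spectrum: by definition $\mu_1(M)=\inf\{I_f(u,u)/\int_M u^2 e^{-f}d\sigma:\ u\in C_c^\infty(M),\ u\not\equiv0\}$, and an eigenfunction with $L_fu=-\mu_1(M)u$ realizes this infimum since then $I_f(u,u)=\mu_1(M)\int_Mu^2e^{-f}d\sigma$. The hypothesis $\mu_1(M)\neq-\infty$ guarantees that this infimum is a finite real number, which is exactly what makes the limiting eigenvalue problem nondegenerate.

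For existence I would argue by exhaustion. Fix $p\in M$ and choose smooth, relatively compact domains with $p\in\Omega_1$ and $\Omega_j\subset\Omega_{j+1}$ exhausting $M$. On each $\Omega_j$ the operator $-L_f$ with Dirichlet condition is self-adjoint on $L^2_f(\Omega_j)$ with discrete spectrum bounded below; its bottom eigenvalue $\mu_1(\Omega_j)$ is simple and carries an eigenfunction of one sign, so I may take $u_j>0$ in $\Omega_j$ with $u_j|_{\partial\Omega_j}=0$, $L_fu_j=-\mu_1(\Omega_j)u_j$, normalized by $u_j(p)=1$. Domain monotonicity gives that $\mu_1(\Omega_j)$ is nonincreasing with $\mu_1(\Omega_j)\to\mu_1(M)$. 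On a fixed compact $K\subset M$ the $u_j$ solve, for $j$ large, the uniformly elliptic equation $-\Delta_f u_j=(|A|^2+k+\mu_1(\Omega_j))u_j$ with smooth, locally bounded coefficients; the Harnack inequality together with $u_j(p)=1$ yields uniform two-sided bounds on $K$, and interior Schauder estimates then give uniform $C^{2,\alpha}_{\mathrm{loc}}$ bounds. A diagonal subsequence converges in $C^2_{\mathrm{loc}}$ to $u\geq0$ with $u(p)=1$ and $L_fu=-\mu_1(M)u$, and the strong maximum principle forces $u>0$ on all of $M$.

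For uniqueness, suppose $w\in W^{1,2}_f$ satisfies $L_fw=-\mu_1(M)w$. Since $u>0$ is smooth, $\phi=w/u$ is a well-defined $C^2$ function; substituting $w=\phi u$ into the two eigenvalue equations cancels the $|A|^2$ and $k$ terms and leaves $u\Delta_f\phi+2\langle\nabla u,\nabla\phi\rangle=0$, equivalently $\di(u^2e^{-f}\nabla\phi)=0$. Testing this against $\psi_j^2\phi$, with $\psi_j\equiv1$ on $B_j$ cutting off linearly to $0$ on $B_{j+1}$, and integrating by parts over the compact support via Lemma \ref{Green} gives
$$\int_M\psi_j^2u^2|\nabla\phi|^2e^{-f}d\sigma=-2\int_M\psi_j\phi u^2\langle\nabla\psi_j,\nabla\phi\rangle e^{-f}d\sigma.$$
The Cauchy-Schwarz inequality, using $\phi^2u^2=w^2$, then yields $\int_M\psi_j^2u^2|\nabla\phi|^2e^{-f}d\sigma\leq 4\int_Mw^2|\nabla\psi_j|^2e^{-f}d\sigma$, and since $|\nabla\psi_j|\leq1$ is supported in $B_{j+1}\setminus B_j$ while $w\in L^2_f$, the right-hand side tends to $0$. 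Letting $j\to\infty$ and using monotone convergence gives $\int_Mu^2|\nabla\phi|^2e^{-f}d\sigma=0$, so $\phi$ is constant and $w=Cu$. Lemmas \ref{lemma2} and \ref{l2} are precisely what legitimize these limits in the complete noncompact setting, by providing the integrability ($w\in L^2_f$ and $|A|w\in L^2_f$) that controls every tail contribution.

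I expect the \emph{main obstacle} to be the existence step, namely producing a limit that is at once strictly positive and an eigenfunction for exactly $\mu_1(M)$. The delicate points are the convergence $\mu_1(\Omega_j)\to\mu_1(M)$ (where $\mu_1(M)\neq-\infty$ is essential so the limiting equation is finite and nondegenerate) and the uniform Harnack and Schauder estimates that prevent the $u_j$ from degenerating or concentrating; once these are secured, positivity follows from the strong maximum principle and the uniqueness argument is comparatively routine.
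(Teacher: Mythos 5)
Your proposal is correct and follows exactly the route the paper intends: the paper gives no proof of this lemma at all, stating it as a consequence of Lemmas \ref{lemma2} and \ref{l2} together with ``ideas as the proof of Lemma 9.25'' of \cite{ColdingMinicozzi2012}, and your argument is precisely a faithful reconstruction of that cited proof --- the Fischer-Colbrie--Schoen exhaustion with Dirichlet eigenfunctions, Harnack and Schauder estimates for existence and positivity, and the quotient $\phi=w/u$ with $\di(u^2e^{-f}\nabla\phi)=0$ plus cutoffs for uniqueness. The only remark worth recording is that your uniqueness step in fact uses only $w\in L^2_f$, so it is, if anything, slightly leaner than the dependence on Lemma \ref{l2} that the paper advertises.
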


\begin{lemma}\label{lemma6}
	Let $M$ be a complete oriented constant weighed mean curvature hypersurface in gradient Ricci soliton $\overline M_f$. If $\mu_1(M)\neq-\infty$ and $\Vol_f(M)<\infty$, then
	\begin{equation}
	\int_M|A|^2e^{-f}d\sigma<\infty.
	\end{equation}
\end{lemma}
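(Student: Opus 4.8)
The plan is to exploit the existence of a positive bottom eigenfunction together with the differential inequality of Lemma \ref{l2}(ii). Since $\mu_1(M)\neq-\infty$ by hypothesis, Lemma \ref{proposition4} furnishes a positive $C^2$ function $u$ on $M$ satisfying $L_f u=-\mu_1(M)u$. This positivity is exactly what is needed to form $\log u$, and hence to apply part (ii) of Lemma \ref{l2} with $h=u$ and $\mu=\mu_1(M)$; note that part (ii) asks only that $h>0$ and $\phi\in W^{1,2}_f$, so no integrability of $u$ itself is required.

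Next I would feed into the inequality (\ref{equation3}) a family of cutoff functions $\phi_j$ that are identically $1$ on the intrinsic ball $B_j=B_j(p)$ and decrease linearly to $0$ from $\partial B_j$ to $\partial B_{j+1}$, so that $|\nabla\phi_j|\leq1$. By completeness of $M$ (Hopf--Rinow) each closed ball $\overline{B_{j+1}}$ is compact, hence $\phi_j$ has compact support; since $\Vol_f(M)<\infty$, this places $\phi_j\in W^{1,2}_f$. Discarding the nonnegative term $|\nabla\log u|^2$ on the left-hand side of (\ref{equation3}) leaves
\begin{equation*}
2\int_M\phi_j^2|A|^2e^{-f}d\sigma\leq\int_M 4|\nabla\phi_j|^2e^{-f}d\sigma-2(\mu_1(M)+k)\int_M\phi_j^2e^{-f}d\sigma.
\end{equation*}

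Now both terms on the right are controlled by the finite weighted volume. Because $|\nabla\phi_j|\leq1$ and $\nabla\phi_j$ is supported on the annulus $B_{j+1}\setminus B_j$, the first integral is at most $4\Vol_f(B_{j+1}\setminus B_j)$, which tends to $0$ as $j\to\infty$ since it is the tail of the convergent integral $\Vol_f(M)$. For the second, regardless of the sign of the finite constant $\mu_1(M)+k$ (finite precisely because $\mu_1(M)\neq-\infty$), one estimates $-2(\mu_1(M)+k)\int_M\phi_j^2e^{-f}d\sigma\leq 2|\mu_1(M)+k|\,\Vol_f(M)$. Letting $j\to\infty$ and applying the monotone convergence theorem on the left (as $\phi_j^2|A|^2\nearrow|A|^2$) yields
\begin{equation*}
\int_M|A|^2e^{-f}d\sigma\leq|\mu_1(M)+k|\,\Vol_f(M)<\infty,
\end{equation*}
which is the assertion.

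I do not anticipate a serious obstacle, since the analytic weight of the argument is carried by Lemmas \ref{proposition4} and \ref{l2}. The two points needing care are the verification that the cutoffs $\phi_j$ lie in $W^{1,2}_f$ and have compact support (resting on completeness of $M$ together with $\Vol_f(M)<\infty$), and the bookkeeping of the sign of $\mu_1(M)+k$, so that the absolute-value bound on the right-hand side is uniform in $j$ and survives the limit.
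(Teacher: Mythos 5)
Your proposal is correct and follows essentially the same route as the paper: invoke Lemma \ref{proposition4} to get a positive eigenfunction $h$ for $\mu_1(M)$, apply Lemma \ref{l2}(ii) with cutoffs $\phi_j$, drop the $|\nabla\log h|^2$ term, and let $j\to\infty$ with monotone convergence. The only difference is cosmetic: the paper bounds $4|\nabla\phi_j|^2\leq 4$ uniformly, obtaining the constant $\left(4+2|\mu_1(M)+k|\right)\Vol_f(M)$, while you observe that $\nabla\phi_j$ is supported in the annulus $B_{j+1}\setminus B_j$ so that term vanishes in the limit, giving the marginally sharper bound $|\mu_1(M)+k|\,\Vol_f(M)$.
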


\begin{proof}
	 Since $\mu_1(M)\neq-\infty$, there is a $C^2$ positive function $h$ on $M$ satisfying $L_fh=-\mu_1(M)h$ by Lemma \ref{proposition4}. Let $\phi_j$ be the cut off functions such that $|\nabla\phi_j|\leq1$ and $\phi_j\leq1$. So $\phi_j\in W^{1,2}(e^{-f}d\sigma)$ because $\Vol_f(M)<\infty$. By Lemma \ref{l2}, equality (\ref{equation3}), we have
	\begin{eqnarray*}
	2\int_M\phi_j^2|A|^2e^{-f}d\sigma&\leq&\int_M\phi_j^2\left(2|A|^2+|\nabla\log h|^2\right)e^{-f}d\sigma\\
	&\leq&\int_M\left(4|\nabla\phi_j|^2-2(\mu_1(M)+k)\phi_j^2\right)e^{-f}d\sigma\\
	&\leq&\left(4+2|\mu_1(M)+k|\right)\int_Me^{-f}d\sigma<\infty.
	\end{eqnarray*}
	By letting $j\rightarrow\infty,$ we obtain the conclusion this lemma by monotone convergence theorem. 
\end{proof}

The compact manifolds which admit a parallel vector field with respect to some metric was characterized by Welsh in \cite{Welsh1986a}. Namely, they are the compact fibre bundles over tori with finite structural group. The dimension of the torus can be assumed to be the number of linearly independent parallel vector fields.

The noncompact case has also been solved by Welsh in \cite{Welsh1986}. If fact, 

\begin{proposition}[\cite{Welsh1986},  Proposition 2.1]\label{proposition0}
	If a Riemannian manifold $M$ admits a complete parallel vector field, then either $M$ is diffeomorphic to the product of a Euclidian space with some other manifold or there is a circle action on $M$ whose orbits are not real homologous to zero. 
\end{proposition}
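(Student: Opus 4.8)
The plan is to study the one-parameter group of isometries generated by $X$ and to read off the two alternatives from the structure of its closure inside the isometry group. First I would observe that a parallel field has constant norm, so after rescaling we may assume $|X|\equiv 1$; moreover $X$ is automatically a Killing field, since the identity $\langle\overline\nabla_Y X,Z\rangle+\langle\overline\nabla_Z X,Y\rangle=0$ holds trivially when $\overline\nabla X=0$. By hypothesis its flow $\phi_t$ is complete, so $\{\phi_t\}_{t\in\R}$ is a one-parameter group acting on $M$ by isometries. Set $G=\overline{\{\phi_t\}}\subseteq\mathrm{Isom}(M)$. Since $\mathrm{Isom}(M)$ is a Lie group (Myers--Steenrod) and $G$ is the closure of a one-parameter subgroup, $G$ is a connected abelian Lie group carrying a dense one-parameter subgroup, and such a monothetic group is isomorphic either to $\R$ or to a torus $T^b$. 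I would also record the metric dual $\omega=X^{\flat}$, which is a parallel, hence closed, $1$-form on $M$. The entire argument is then organized around whether $G$ is noncompact ($\cong\R$) or compact ($\cong T^b$).

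In the noncompact case I would prove that the $\R$-action is free and proper and deduce a product. Properness is immediate, because $\mathrm{Isom}(M)$ acts properly and $G$ is a closed subgroup. For freeness, suppose $\phi_{t_0}(p)=p$ with $t_0\neq 0$; then every $\phi_{n t_0}$ lies in the isotropy group of $p$, which is compact, so $\{\phi_{n t_0}\}_{n\in\mathbb Z}$ is precompact in $\mathrm{Isom}(M)$. But under $G\cong\R$ this set corresponds to the closed, discrete, unbounded subset $t_0\mathbb Z$, which is not precompact, a contradiction. Hence the action is free, $M\to M/\R$ is a principal $\R$-bundle, and since $\R$ is contractible this bundle is trivial, giving $M\cong\R\times N$; this is the first alternative, with Euclidean factor $\R$.

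In the compact case $G\cong T^b$ I would manufacture the circle action. Writing $\mathfrak g$ for the Lie algebra of $G$, represented by mutually commuting Killing fields on $M$, I first claim that $\langle X,Y\rangle$ is constant on $M$ for every $Y\in\mathfrak g$. Indeed $\overline\nabla_Z\langle X,Y\rangle=\langle X,\overline\nabla_Z Y\rangle$, and the Killing equation for $Y$ gives $\langle\overline\nabla_Z Y,X\rangle=-\langle\overline\nabla_X Y,Z\rangle$, which vanishes because $\overline\nabla_X Y=\overline\nabla_Y X+[X,Y]=0$ (using $\overline\nabla X=0$ and $[X,Y]=0$, as $G$ is abelian). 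Since $\langle X,X\rangle=1$, the linear functional $\langle X,\cdot\rangle$ on $\mathfrak g$ is nonzero, hence nonzero on some element $Y_0$ of the integer lattice $\ker(\exp\colon\mathfrak g\to T^b)$. This $Y_0$ generates a circle subgroup $S^1\subseteq T^b$, whose orbit $\gamma$ through a point satisfies $\int_\gamma\omega=\int_0^1\langle X,Y_0\rangle\,dt=\langle X,Y_0\rangle\neq 0$. As $\omega$ is closed, this period equals the pairing of $[\omega]$ with $[\gamma]$, so $[\gamma]\neq 0$ in $H_1(M;\R)$; thus the $S^1$-action has orbits that are not real-homologous to zero, which is the second alternative.

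The step I expect to be the main obstacle is the noncompact case, namely proving that no orbit can be periodic once $G$ is noncompact, i.e. the freeness argument above, since a priori the flow of a parallel field could have closed orbits carrying nontrivial holonomy. The compactness of the isotropy group resolves this cleanly, but it is the delicate point, together with the justification that the closure of a monothetic subgroup is exactly $\R$ or $T^b$. By contrast, the constancy of $\langle X,Y\rangle$ and the lattice/period computation in the compact case are then routine.
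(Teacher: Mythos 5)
The paper does not prove this proposition at all: it is imported verbatim from Welsh's paper (\cite{Welsh1986}, Proposition 2.1), so there is no in-paper argument to compare against. Your proof is correct, and it is essentially the argument of the cited source: pass to the closure $G$ of the flow in $\mathrm{Isom}(M)$, use that a monothetic connected Lie group is $\R$ or a torus, split off a line via the free and proper $\R$-action in the noncompact case, and in the compact case detect a homologically nontrivial circle orbit by pairing against the closed (parallel) dual $1$-form $\omega=X^{\flat}$. The two delicate points you flag are handled adequately: the $\R$-or-torus dichotomy follows since a one-parameter subgroup with a noncompact direction is already closed, and your freeness argument via compactness of isotropy groups is sound; note also that your period computation gives the same nonzero value $\langle X,Y_0\rangle$ on \emph{every} orbit (so no orbit is real homologous to zero, and in particular the action has no fixed points), which is exactly what the statement requires.
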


\begin{proof}[Proof of Theorem \ref{itheorem2}.]
	Now suppose that $\dim \mathcal P_{\overline M_f}=l>0$ and by hypothesis
	$$\Ind_f(M)=\dim \mathcal P_{\overline M_f}-\dim\{X\in P_{\overline M_f}: \ \langle X,\eta\rangle\equiv0\}.$$ We can have two cases: either $\Ind_f(M)=\dim \mathcal P_{\overline M_f}$ or $\Ind_f(M)\neq\dim \mathcal P_{\overline M_f}.$ 
	
	Initially, suppose that $\Ind_f(M)=\dim \mathcal P_{\overline M_f}$, that is $\dim\{X\in\mathcal P_{\overline M_f}: \ \langle X,\eta\rangle\equiv0\}=0$ and there exists $l$ independent linearly parallel unit vector fields $X_1,X_2, \ldots,X_l$ such that $\langle X_j,\eta\rangle\not\equiv0$ for all $j=1,2,\ldots, l$. Put $$u_j=\langle X_j,\eta\rangle \ \ \ \text{for all} \ j=1,2,\ldots,l.$$ By Proposition \ref{proposition1}, $$L_fu_j=ku_j,$$
	where $\overline{\Ric}_f=kg$. Note that 
	\begin{equation}\label{eq4}
	\int_Mu_j^{2}e^{-f}d\sigma\leq\int_Me^{-f}d\sigma<\infty
	\end{equation}
	because
	$$u_j=\langle X_j,\eta\rangle\leq|X_j||\eta|=1.$$
	 Hence, $u_1,u_2,\ldots,u_l$ are $L^2(e^{-f}d\sigma)$ eigenfunctions with negative eigenvalue $-k$. Since, by hypothesis, $$\Ind_f(M)=l>0,$$ then 
	 the bottom $\mu_1(M)$ of the spectrum of $L_f$ satisfies  $\mu_1(M)=-k.$ 
	 
	 Now, consider a local orthonormal frame $\{e_1,e_2,\ldots,e_n\}$ on $M$. Observe that
	 \begin{eqnarray}\label{equation18}
	 |\nabla u_j|^2=\sum_{i=1}^n|\nabla_{e_i}u_j|^2\geq \sum_{i=1}^n\left(\sum_{k=1}^n a_{ik}^2\right)\left(\sum_{k=1}^n\langle e_k,X_j\rangle^2\right)\leq |A|^2.
	 \end{eqnarray}
    Hence, it follows from Lemma \ref{lemma6} and inequality (\ref{equation18}) that
	$$\int_M|\nabla u_j|^{2}e^{-f}d\sigma\leq\int_M|A|^2e^{-f}d\sigma<\infty$$
	and by using (\ref{eq4}), we get $u_j\in W^{1,2}(e^{-f}d\sigma)$. By Lemma \ref{proposition4}, $u_j>0$ on $M$ without lost of generality. Therefore, it follows from Lemma \ref{lema5} and the integrability of $|A|^2$, that $$\int_M|A|^2u_je^{-f}d\sigma=0.$$ Thus, $|A|\equiv0$ on $M$. 
	
In the case which $\Ind_f(M)\neq\dim \mathcal P_{\overline M_f}$, we have there exists a $u_0\equiv0,$ where $u_0=\langle X_0,\eta\rangle$ and $X_0\in T\overline M_f$ is parallel field with respect to Riemannian connection of $(\overline{M}^{n+1},\overline g)$, in particular, $X_0\in TM$ is parallel with respect to Riemannian connection of $(M,g)$. By Proposition \ref{proposition0}, either $M$ is diffeomorphic to the product of a Euclidian space with some other manifold, or there is a circle action on $M$ whose orbits are not real homologous to zero.   

\end{proof}

\begin{bibdiv}
	\begin{biblist}

\bib{BakryEmery1985}{incollection}{
	AUTHOR = {Bakry, D. and {\'E}mery, Michel},
	TITLE = {Diffusions hypercontractives},
	BOOKTITLE = {S\'eminaire de probabilit\'es, {XIX}, 1983/84},
	SERIES = {Lecture Notes in Math.},
	VOLUME = {1123},
	PAGES = {177--206},
	PUBLISHER = {Springer, Berlin},
	YEAR = {1985},
	MRCLASS = {60J60 (58C40 58G32)},
	MRNUMBER = {889476},
	MRREVIEWER = {Jacques Vauthier},
	DOI = {10.1007/BFb0075847},
	URL = {http://dx.doi.org/10.1007/BFb0075847},
}

\bib{ChengMejiaZhou2015}{article}{
	AUTHOR = {Cheng, Xu and Mejia, Tito and Zhou, Detang},
	TITLE = {Stability and compactness for complete {$f$}-minimal surfaces},
	JOURNAL = {Trans. Amer. Math. Soc.},
	FJOURNAL = {Transactions of the American Mathematical Society},
	VOLUME = {367},
	YEAR = {2015},
	NUMBER = {6},
	PAGES = {4041--4059},
	ISSN = {0002-9947},
	MRCLASS = {53C42 (58E30)},
	MRNUMBER = {3324919},
	MRREVIEWER = {Haizhong Li},
	DOI = {10.1090/S0002-9947-2015-06207-2},
	URL = {http://dx.doi.org/10.1090/S0002-9947-2015-06207-2},
}

\bib{ColdingMinicozzi2011}{book}{
	AUTHOR = {Colding, Tobias Holck and Minicozzi, II, William P.},
	TITLE = {A course in minimal surfaces},
	SERIES = {Graduate Studies in Mathematics},
	VOLUME = {121},
	PUBLISHER = {American Mathematical Society, Providence, RI},
	YEAR = {2011},
	PAGES = {xii+313},
	ISBN = {978-0-8218-5323-8},
	MRCLASS = {53A10 (35J93 49Q05)},
	MRNUMBER = {2780140},
	MRREVIEWER = {Andrew Bucki},
	DOI = {10.1090/gsm/121},
	URL = {http://dx.doi.org/10.1090/gsm/121},
}

\bib{ChengZhou2013}{article}{
	AUTHOR = {Cheng, Xu and Zhou, Detang},
	TITLE = {Volume estimate about shrinkers},
	JOURNAL = {Proc. Amer. Math. Soc.},
	FJOURNAL = {Proceedings of the American Mathematical Society},
	VOLUME = {141},
	YEAR = {2013},
	NUMBER = {2},
	PAGES = {687--696},
	ISSN = {0002-9939},
	MRCLASS = {53C20 (53C44)},
	MRNUMBER = {2996973},
	MRREVIEWER = {Manuel Fern{\'a}ndez-L{\'o}pez},
	DOI = {10.1090/S0002-9939-2012-11922-7},
	URL = {http://dx.doi.org/10.1090/S0002-9939-2012-11922-7},
}

\bib{ChengZhou2015}{article}{
	AUTHOR = {Cheng, Xu and Zhou, Detang},
	TITLE = {Stability properties and gap theorem for complete f-minimal
		hypersurfaces},
	JOURNAL = {Bull. Braz. Math. Soc. (N.S.)},
	FJOURNAL = {Bulletin of the Brazilian Mathematical Society. New Series.
		Boletim da Sociedade Brasileira de Matem\'atica},
	VOLUME = {46},
	YEAR = {2015},
	NUMBER = {2},
	PAGES = {251--274},
	ISSN = {1678-7544},
	MRCLASS = {58J50 (58E30)},
	MRNUMBER = {3448944},
	DOI = {10.1007/s00574-015-0092-z},
	URL = {http://dx.doi.org/10.1007/s00574-015-0092-z},
}

\bib{BarbosadoCarmoEschenburg1988}{article}{
	AUTHOR = {Barbosa, J. Lucas and do Carmo, Manfredo and Eschenburg, Jost},
	TITLE = {Stability of hypersurfaces of constant mean curvature in
		{R}iemannian manifolds},
	JOURNAL = {Math. Z.},
	FJOURNAL = {Mathematische Zeitschrift},
	VOLUME = {197},
	YEAR = {1988},
	NUMBER = {1},
	PAGES = {123--138},
	ISSN = {0025-5874},
	CODEN = {MAZEAX},
	MRCLASS = {53C42},
	MRNUMBER = {917854},
	MRREVIEWER = {Johan Deprez},
	DOI = {10.1007/BF01161634},
	URL = {http://dx.doi.org/10.1007/BF01161634},
}

\bib{BarbosadoCarmo1984}{article}{
	AUTHOR = {Barbosa, Jo{\~a}o Lucas and do Carmo, Manfredo},
	TITLE = {Stability of hypersurfaces with constant mean curvature},
	JOURNAL = {Math. Z.},
	FJOURNAL = {Mathematische Zeitschrift},
	VOLUME = {185},
	YEAR = {1984},
	NUMBER = {3},
	PAGES = {339--353},
	ISSN = {0025-5874},
	CODEN = {MAZEAX},
	MRCLASS = {58E12 (49F10 53C42)},
	MRNUMBER = {731682},
	MRREVIEWER = {R. Osserman},
	DOI = {10.1007/BF01215045},
	URL = {http://dx.doi.org/10.1007/BF01215045},
}

\bib{McGR2015}{article}{
	AUTHOR = {McGonagle, Matthew and Ross, John},
	TITLE = {The hyperplane is the only stable, smooth solution to the
		isoperimetric problem in {G}aussian space},
	JOURNAL = {Geom. Dedicata},
	FJOURNAL = {Geometriae Dedicata},
	VOLUME = {178},
	YEAR = {2015},
	PAGES = {277--296},
	ISSN = {0046-5755},
	MRCLASS = {53C42 (49Q10 53A10)},
	MRNUMBER = {3397495},
	MRREVIEWER = {S{\l}awomir Kolasi{\'n}ski},
	DOI = {10.1007/s10711-015-0057-9},
	URL = {http://dx.doi.org/10.1007/s10711-015-0057-9},
}

\bib{MunteanuWang2012}{article}{
	AUTHOR = {Munteanu, Ovidiu and Wang, Jiaping},
	TITLE = {Analysis of weighted {L}aplacian and applications to {R}icci
		solitons},
	JOURNAL = {Comm. Anal. Geom.},
	FJOURNAL = {Communications in Analysis and Geometry},
	VOLUME = {20},
	YEAR = {2012},
	NUMBER = {1},
	PAGES = {55--94},
	ISSN = {1019-8385},
	MRCLASS = {58J60 (53C44)},
	MRNUMBER = {2903101},
	MRREVIEWER = {Paolo Mastrolia},
	DOI = {10.4310/CAG.2012.v20.n1.a3},
	URL = {http://dx.doi.org/10.4310/CAG.2012.v20.n1.a3},
}

\bib{Rocha2016}{article}{
	author={Rocha, Adina},
	title={Essential spectrum of the weighted Laplacian on noncompact manifolds and applications},
	journal={Geometriae Dedicata},
	year={2016},
	pages={1--23},
	issn={1572-9168},
	doi={10.1007/s10711-016-0186-9},
	url={http://dx.doi.org/10.1007/s10711-016-0186-9}
}

\bib{Hamilton1982}{article}{
	AUTHOR = {Hamilton, Richard S.},
	TITLE = {Three-manifolds with positive {R}icci curvature},
	JOURNAL = {J. Differential Geom.},
	FJOURNAL = {Journal of Differential Geometry},
	VOLUME = {17},
	YEAR = {1982},
	NUMBER = {2},
	PAGES = {255--306},
	ISSN = {0022-040X},
	CODEN = {JDGEAS},
	MRCLASS = {53C25 (35K55 58G30)},
	MRNUMBER = {664497},
	MRREVIEWER = {J. L. Kazdan},
	URL = {http://projecteuclid.org/euclid.jdg/1214436922},
}

\bib{Welsh1986}{article}{
	author = {Welsh, David J.},
	journal = {Proc. Amer. Math. Soc.},
	language = {},
	number = {},
	pages = {311-314},
	publisher = {},
	title = {On the existence of complete parallel vector fields},
	url = {http://www.ams.org/journals/proc/1986-097-02/S0002-9939-1986-0835888-6},
	volume = {97},
	year = {1986},
}

\bib{Welsh1986a}{article}{
	author = {Welsh, David J.},
	fjournal = {Illinois Journal of Mathematics},
	journal = {Illinois J. Math.},
	month = {03},
	number = {1},
	pages = {9--18},
	publisher = {University of Illinois at Urbana-Champaign, Department of Mathematics},
	title = {Manifolds that admit parallel vector fields},
	url = {http://projecteuclid.org/euclid.ijm/1256044750},
	volume = {30},
	year = {1986}
}

\bib{Fischer-Colbrie1985}{article}{
	author={Fischer-Colbrie, D.},
	title={On complete minimal surfaces with finite Morse index in three manifolds},
	journal={Inventiones mathematicae},
	year={1985},
	volume={82},
	number={1},
	pages={121--132},
	issn={1432-1297},
	doi={10.1007/BF01394782},
	url={http://dx.doi.org/10.1007/BF01394782}
}

\bib{ColdingMinicozzi2012}{article}{
	author={Colding, Tobias H.},
	author={Minicozzi II, William P.},
	title={Generic mean curvature flow I; generic singularities},
	journal={},
	year={2012},
	volume={175},
	number={},
	pages={755--833},
	issn={},
	doi={10.4007/annals.2012.175.2.7},
	url={http://annals.math.princeton.edu/2012/175-2/p07}
}

\bib{CaoZhou2010}{article}{
	AUTHOR = {Cao, Huai-Dong and Zhou, Detang},
	TITLE = {On complete gradient shrinking {R}icci solitons},
	JOURNAL = {J. Differential Geom.},
	FJOURNAL = {Journal of Differential Geometry},
	VOLUME = {85},
	YEAR = {2010},
	NUMBER = {2},
	PAGES = {175--185},
	ISSN = {0022-040X},
	CODEN = {JDGEAS},
	MRCLASS = {53C21 (53C20 53C44)},
	MRNUMBER = {2732975},
	MRREVIEWER = {Manuel Fern{\'a}ndez-L{\'o}pez},
	URL = {http://projecteuclid.org/euclid.jdg/1287580963},
}

\bib{Hamilton1988}{article}{
	title={The Ricci flow on surfaces},
	author={Hamilton, Richard S.},
	journal={Contemp. Math},
	volume={71},
	number={1},
	pages={237--261},
	year={1988}
}

	\end{biblist}
\end{bibdiv}

\vspace{0.5cm}

\noindent
{\bf Hilário Alencar}\\
Instituto de Matem\'atica\\
Universidade Federal de Alagoas\\
Macei\'o, AL, 57072-900, Brazil\\
\email{hilario@mat.ufal.br}
\vspace{0.5cm}

\noindent
{\bf Adina Rocha}\\	
Instituto de Matem\'atica\\
Universidade Federal de Alagoas\\
Macei\'o, AL, 57072-900, Brazil\\
\email{adina@pos.mat.ufal.br}
\vspace{0.5cm}

\end{document}